\DeclareMathAlphabet      {\mathbf}{OT1}{cmr}{bx}{n}
\DeclareFontFamily{OT1}{pzc}{}
\DeclareFontShape{OT1}{pzc}{m}{it}%
{<-> s * [1.15] pzcmi7t}{}
\DeclareMathAlphabet{\mathpzc}{OT1}{pzc}{m}{it}
\newtheorem{thm}{Theorem}
\newtheorem{theorem}{Theorem}[section]
\newtheorem{lemma}[theorem]{Lemma}
\newtheorem{corollary}[theorem]{Corollary}
\newtheorem{definition}[theorem]{Definition}
\newtheorem{remark}{Remark}
\def\ack{\section*{Acknowledgements}%
  \addtocontents{toc}{\protect\vspace{6pt}}%
  \addcontentsline{toc}{section}{Acknowledgements}%
}
\begin{document} \small
\pagenumbering{arabic}
\title{Stability of the  braid  types   defined by  the symplecticmorphisms preserving a link}
\author{{ Guanheng Chen}}
\date{}
\maketitle
\thispagestyle{empty}
\begin{abstract}    
Fix a suitable link on the  disk.  Recently, F. Morabito associates each Hamiltonian symplecticmorphism preserving the link to a braid type. Based on  this construction, Morabito defines   a family of  pseudometrics  on the braid groups by using the Hofer metric. In this paper, we show that two Hamiltonian symplecticmorphisms define  the  same braid type  provided that their Hofer distance is  sufficiently small. As a corollary, the  pseudometrics   defined by Morabito  are nondegenerate.
\end{abstract}

\section{Introduction and main results}
\paragraph{Preliminaries}

Let $\widetilde{Conf^k}(\mathbb{D}): =\{(x_1,..,x_k) \in \mathbb{D}^k: x_i \ne x_j \mbox{ if } i \ne j \}$, where $\mathbb{D}$ is the disk.  The configuration space is $Conf^k(\mathbb{D}): =\widetilde{Conf^k}(\mathbb{D}) /S_k $, where $S_k$ is the permutation group.  Then the \textbf{braid group} is $\mathcal{B}_k := \pi_1(Conf^k(\mathbb{D}))$.

To see the elements   in $\mathcal{B}_k $ more geometrically,  we fix a base point $\mathbf{x} = (x_1, ...x_k) \in \widetilde{Conf^k}(\mathbb{D})$. Then  a \textbf{braid} is a collection of paths $\{\gamma_i: [0,1] \to \mathbb{D} \}_{i=1}^k$ such that
\begin{enumerate}
\item
$\gamma_i(0)= x_i$
and $\gamma_i(1) = x_{\sigma(i)}$, where $\sigma \in S_k$.
\item
$\gamma_i(t) \ne \gamma_j(t)$ for $i \ne j$.
\end{enumerate}
Obviously, this gives   a loop $\gamma: S^1 \to Conf^k(\mathbb{D})$.  Conversely, we can construct a braid from a loop in  $Conf^k(\mathbb{D})$ easily. So we don't distinguish these two concepts throughout  this note.
The homotopy class $[\gamma] \in \mathcal{B}_k $ is called a  \textbf{braid type}. For more details about the braid group, we refer the readers to \cite{KT}.


The braid group is a classical object in low dimensional topology and it has been studied by mathematicians  for decades.    Recently,   some progress  has been   made to study  the braid group from  view points of  symplectic geometry \cite{AM, FM, MK, H3}.   To describe their results, we first review some definitions  in symplectic geometry quickly.

Let $\Sigma$ be  a surface  with a volume form $\omega$.  
 A function $H: [0,1]_t \times \Sigma \to \mathbb{R}$ is called  \textbf{a Hamiltonian function}.  We can associate it to a vector field $X_H$ called the \textbf{Hamiltonian vector field} by the relation $\omega(X_H, \cdot) =d_{\Sigma }H$. Let $\varphi_H^t$ be the flow generated by $X_H$. A diffeomorphism $\varphi$ of $\Sigma $ is called a \textbf{Hamiltonian symplecticmorphism} if $\varphi =\varphi^1_H$ for some $H$.  Let $Ham(\Sigma, \omega)$ be the set of all Hamiltonian symplecticmorphisms. In fact, $Ham(\Sigma, \omega)$ is a group.

There is a distance function  on $Ham(\Sigma, \omega)$ called the \textbf{Hofer metric} \cite{HH}.  It is defined as follows.   Given a Hamiltonian function $H$, the Hofer norm of $H$ is $$|H|_{(1, \infty)} : =\int_0^1 (\max_{\Sigma} H_t - \min_{\Sigma} H_t)  dt.$$
The \textbf{Hofer norm} of a Hamiltonian symplecticmorphism is defined by
$$|\varphi|_{Hofer} : = \inf \{ |H|_{(1, \infty)}  : \varphi =  \varphi^1_H   \}.$$
Given $\varphi_1, \varphi_2 \in Ham(\Sigma, \omega)$, the Hofer metric is $d_{Hofer}(\varphi_1, \varphi_2) :=|\varphi_1 \circ \varphi_2 ^{-1}|_{Hofer}. $

\paragraph{Main results}
Given a union of distinct 1-periodic orbits $ \alpha$ of $\varphi_H^1$, one  can view them as a braid in $\Sigma$ and hence we get a braid type  $b(\alpha)$ associated with  these periodic orbits.  M.R.R. Alves and M. Meiwes show that  if another symplecticmorphism $\varphi^1_{H'}$ is sufficiently close to $\varphi_H^1$ with respect to the Hofer metric, then  there is a union of  distinct 1-periodic orbits $\alpha'$ of $\varphi_{H'}^1$ such that $b(\alpha) =b(\alpha')$ \cite{AM}.   Recently, M. Hutchings generalizes  this result to the general cases  (without the Hamiltonian assumption and $\alpha$ can contain periodic orbits with arbitrary periods) by using   holomorphic curve methods \cite{H3}.

In \cite{FM}, F. Morabito constructs braid types  for the Hamiltonian symplecticmorphisms  preserving a fixed link.  The braid types   come from the Hamiltonian chords rather than periodic orbits. In this note, we follow  Alves and  Meiwes's idea to prove a parallel result  for   the braid types  defined by Morabito.   Since the Hamiltonian symplecticmorphisms are degenerate in our situation, the idea is more close to  M. Khanevsky's generalization \cite{MK}.

Before giving the satement of our result, let us review   Morabito's construction \cite{FM}.  Let $\underline{L}=\cup_{i=1}^k L_i$ be a  link (a disjoint union of embedded circles) on  the disk $\mathbb{D}$.  Consider the group
\begin{equation*}
Ham_{\underline{L}}(\mathbb{D}, \omega): =\{\varphi \in Ham_c(\mathbb{D}, \omega) \vert \varphi(\underline{L}) = \underline{L}\},
\end{equation*}
where $Ham_c(\mathbb{D}, \omega)$ consists of the Hamiltonian  symplecticmorphisms with compact  support. 
Let $\mathbf{x} =(x_1, ...x_k) \in \underline{L}$ be a fixed base point, where $x_i \in L_i$.  Given $\varphi \in Ham_{\underline{L}}(\mathbb{D}, \omega) $, let $H$ be a Hamiltonian function generated $\varphi$.   
Let $\gamma_{\varphi} =\{\varphi_H^t(x_i)\}_{i=1}^k$. Note that $\varphi(x_i) \in L_{\sigma(i)}$ because $\varphi(\underline{L}) = \underline{L}$, where $\sigma$  is a permutation. Choose paths $\{p_i(t)\}_{i=1}^k$ such that $p_i \subset L_{\sigma(i)}$ and $p_i$ connecting $\varphi(x_i)$ and $x_{\sigma(i)}$. Concatenate $\gamma_{\varphi}$ and  $\{p_i(t)\}_{i=1}^k$;  then we get a braid $\bar{\gamma}_{\varphi}$.  The  braid   type of $\bar{\gamma}_{\varphi}$  is denoted by $b(\varphi, \underline{L})$. Note that  $b(\varphi, \underline{L})$ is independent of the choice of the path $\{p_i(t)\}_{i=1}^k$ and  the base point. Because $\pi_1(Ham_c(\mathbb{D}, \omega)) = 0$,  $b(\varphi, \underline{L})$ is also independent of the choice of the  Hamiltonian function $H$.

To apply the Floer homology, we view the disk $\mathbb{D}$ as  a domain  in   $\mathbb{S}^2-\{p\},$ where $p$ is the south pole. Define $Ham_{\underline{L}}(\mathbb{S}^2, \omega): =\{\varphi \in Ham(\mathbb{S}^2, \omega) \vert \varphi(\underline{L}) = \underline{L}\}.$  Then $Ham_{\underline{L}}(\mathbb{D}, \omega) \subset Ham_{\underline{L}}(\mathbb{S}^2, \omega)$ is the subgroup consisting  of Hamiltonian symplecticmorphisms supported  compactly in $\mathbb{D}$. The main result is as follows.

\begin{thm} \label{thm1}
Suppose that the link $\underline{L}$  is  $\eta$-admissible (see Definition \ref{def1}) with $k$ components. Then there exists a positive constant $\varepsilon_{\underline{L}}$ depending on $\underline{L}$  such that  the following property holds.
Given $\varphi \in Ham_{\underline{L}}(\mathbb{D}, \omega)$, 
for any   $\varphi'  \in Ham_{\underline{L}}(\mathbb{D}, \omega)$ such that $d_{Hofer}(\varphi, \varphi')<  \varepsilon_{\underline{L}}  /k$, then $ b(\varphi, \underline{L}) =  b(\varphi', \underline{L})$. 
\end{thm}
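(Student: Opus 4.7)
My plan is to adapt the continuation argument of Alves--Meiwes \cite{AM} and Khanevsky \cite{MK} to this link-preserving setting: build an explicit continuous family of braids interpolating between representatives of $b(\varphi,\underline{L})$ and $b(\varphi',\underline{L})$, and use the Hofer-smallness hypothesis together with the $\eta$-admissibility of $\underline{L}$ to keep this family inside $Conf^k(\mathbb{D})$.

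First I would construct an interpolating family of orbits. Given $\varphi,\varphi'\in Ham_{\underline{L}}(\mathbb{D},\omega)$ with $d_{Hofer}(\varphi,\varphi')<\varepsilon_{\underline{L}}/k$, choose a Hamiltonian $K$ generating $\varphi'\circ\varphi^{-1}$ with $|K|_{(1,\infty)}<\varepsilon_{\underline{L}}/k$, and for $s\in[0,1]$ set $\varphi_s=\varphi_{sK}^1\circ\varphi$, produced by a smooth Hamiltonian family $\{H_s\}$ with $H_0=H$ generating $\varphi$ and $H_1$ generating $\varphi'$. The basepoint orbits $\gamma_i^s(t)=\varphi_{H_s}^t(x_i)$ for $t\in[0,1]$ are automatically pairwise disjoint (orbits of a single flow starting from distinct points), so $\gamma^s(t)=(\gamma_1^s(t),\ldots,\gamma_k^s(t))$ is a well-defined path in $Conf^k(\mathbb{D})$ for every $s$.

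Next I would close up each $\gamma^s$ to a braid. At $s=0,1$ the endpoint $\varphi_s(x_i)$ lies on $L_{\sigma(i)}$, and we close by an arc in $L_{\sigma(i)}$ to $x_{\sigma(i)}$ as in Morabito's definition; for $s\in(0,1)$ the endpoint generically lies off $\underline{L}$, so the closing-up arc $p_i^s$ must be chosen in $\mathbb{D}$. The trajectory $s\mapsto\varphi_{sK}^1(\varphi(x_i))$ records the displacement of the endpoint, and its geometric behavior is controlled by $|sK|_{(1,\infty)}\le\varepsilon_{\underline{L}}/k$; one then selects $p_i^s$ to follow this displacement back to $L_{\sigma(i)}$ and along $L_{\sigma(i)}$ to $x_{\sigma(i)}$, deforming continuously in $s$ and agreeing with the prescribed choices at the endpoints.

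The main obstacle is ensuring that the concatenated braid $\bar\gamma^s=\gamma^s\ast p^s$ stays in $Conf^k(\mathbb{D})$ for every $s$, i.e.\ that no collision occurs between two closing arcs or between some $p_i^s$ and an orbit $\gamma_j^s$ in a braid-type-changing way. This is where $\eta$-admissibility should enter quantitatively: it ought to supply a positive constant $\varepsilon_{\underline{L}}$ bounding from below the symplectic area needed to sweep one link component across another (or to change the relative homotopy class of a chord tuple), so that a Hofer budget strictly less than $\varepsilon_{\underline{L}}/k$ per chord, applied componentwise $k$ times, rules out any such crossing. I expect this non-collision estimate---most cleanly proved via a Lagrangian displacement-energy bound for $\underline{L}\subset\mathbb{S}^2$, or via a Khanevsky-style direct isotopy argument tracking lengths against the admissibility constant---to be the principal technical difficulty, and the place where the degeneracy flagged in the introduction forces one to work near, rather than inside, a Floer complex. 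Once the estimate is in hand, $s\mapsto[\bar\gamma^s]\in\pi_1(Conf^k(\mathbb{D}))=\mathcal{B}_k$ is a continuous path in a discrete set, hence constant, and $b(\varphi,\underline{L})=b(\varphi',\underline{L})$ follows.
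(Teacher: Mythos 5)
Your proposal takes a genuinely different route from the paper, but it has a gap that I do not think can be repaired along the lines you sketch.

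The paper never constructs a continuous family of braids in $Conf^k(\mathbb{D})$. Instead it works entirely inside quantitative Heegaard Floer homology for the Lagrangian $Sym^k\underline{L}\subset Sym^k\mathbb{S}^2$: one perturbs by a small Morse function $\epsilon f$, takes a narrow action window $(a,b)$ of width less than the spectral gap $\lambda_{\underline{L}}$ coming from the areas $\int_{B_i}\omega$ (Lemma~\ref{lem4}), shows the filtered homology $\widehat{HF}^{(a,b)}$ there is $\mathbb{Z}_2^{2^k}$, and uses the energy estimate of Lemma~\ref{lem1} together with the composition rule of Lemma~\ref{lem2} to show the continuation map is an isomorphism when $|H_+-H_-|_{(1,\infty)}<\lambda_{\underline{L}}/2k$. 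The surviving cycle supplies a single continuation strip $u$ with $u\cdot\Delta=u\cdot D_{k+1}=0$; the homotopy between the two braids is then obtained by gluing $u$ to an explicit topological capping disk $u'$ (built from the generators $u_i$ of $\pi_2(X,Sym^k\underline{L})$) that also avoids $\Delta$. The braid-type equality comes out at the very end, topologically, from that glued cylinder.

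The gap in your approach is the claim that the displacement $s\mapsto\varphi_{sK}^1(\varphi(x_i))$ is ``controlled by $|sK|_{(1,\infty)}\le\varepsilon_{\underline{L}}/k$.'' Hofer-smallness of $K$ does not bound the $C^0$-displacement of points: a Hamiltonian with arbitrarily small oscillation can move a point arbitrarily far in time one (a steep but bounded-oscillation rotation does exactly this). So there is no a priori bound on where the endpoints $\varphi_s(x_i)$ land for $s\in(0,1)$, and consequently no way to choose the closing arcs $p_i^s$ with the non-collision property you need, nor to run the ``area to sweep one component across another'' estimate you invoke. This is precisely the point at which a soft continuation-of-braids argument breaks down and where all the cited works (Alves--Meiwes, Khanevsky, Hutchings) turn to Floer theory or holomorphic curves; the energy of a holomorphic strip is controlled by the action difference, which is controlled by the Hofer norm, and that is the quantitative bridge your proposal is missing. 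There is also a secondary issue: for intermediate $s$ the map $\varphi_s=\varphi_{sK}^1\circ\varphi$ need not lie in $Ham_{\underline{L}}(\mathbb{D},\omega)$, so $b(\varphi_s,\underline{L})$ is not even defined and you are forced into the ad hoc choice of arcs in $\mathbb{D}$ that causes the problem above. Your final reduction (``continuous path into a discrete set is constant'') would be fine if you had the family, but you do not, and the place where you defer to ``the principal technical difficulty'' is where the actual theorem lives.
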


 We prove Theorem \ref{thm1}     by  using   the quantitative Heegaard Floer homology that is introduced by D. Cristofaro-Gardiner,  V. Humili$\grave{e}$re, C. Mak, S. Seyfaddini and I. Smith \cite{CHMSS}. One may  use Hutchings's holomorphic curve methods to prove the same result alternatively.

 Morabito defines a pseudonorm on $\mathcal{B}_k$ by
\begin{equation*}
|b|_{\underline{L}} : = \inf \{ |\varphi|_{Hofer} \vert  \varphi \in  Ham_{\underline{L}}(\mathbb{D}, \omega) \mbox{ such that } b(\varphi, \underline{L}) =b \}.
\end{equation*}
It is remarked by   Morabito that the morphism $Ham_{\underline{L}}(\mathbb{D}, \omega) \to \mathcal{B}_k, \varphi \to b(\varphi, \underline{L})$ is surjective (see Page 4 of \cite{FM}).  Therefore, the definition makes  sense for every $b \in \mathcal{B}_k. $
For $g,h \in  \mathcal{B}_k$, the pseudodistance induced by $|\cdot|_{\underline{L}}$ is $d_{\underline{L}}(g, h) := |g h^{-1}|_{\underline{L}}$.
      Morabito shows that $|\cdot|_{\underline{L}} $ is   indeed a norm when $k=2$ \cite{FM}. As an application of Theorem \ref{thm1}, we show that $| \cdot |_{\underline{L}}$ is actually a norm.
\begin{corollary}
Suppose that the link $\underline{L}$  is   $\eta$-admissible with $k$ components. Then the pseudonorm $| \cdot |_{\underline{L}}$ on $\mathcal{B}_k$ is nondegenerate.  Also, for any two different elements $g, h \in \mathcal{B}_k$,  we have  $d_{\underline{L}}(g, h)  \ge \varepsilon_{\underline{L}} /k$.  Moreover, if $\eta>0$, then $\mathcal{B}_k$ is unbounded with respect to $| \cdot |_{\underline{L}}$.

\end{corollary}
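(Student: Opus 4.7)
The plan is to deduce everything directly from Theorem~\ref{thm1}, starting from the elementary observation that $b(\mathrm{id},\underline{L}) = e \in \mathcal{B}_k$: the paths $\gamma_{\mathrm{id}}$ are constant at $\mathbf{x}$, and one may take each $p_i$ to be constant as well, so $\bar{\gamma}_{\mathrm{id}}$ represents the trivial braid.

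For the lower bound on $d_{\underline{L}}$, fix distinct $g,h \in \mathcal{B}_k$. By definition $d_{\underline{L}}(g,h) = |gh^{-1}|_{\underline{L}}$, and $gh^{-1} \ne e$. Let $\varphi \in Ham_{\underline{L}}(\mathbb{D},\omega)$ be any element with $b(\varphi,\underline{L}) = gh^{-1}$. If one had $|\varphi|_{Hofer} < \varepsilon_{\underline{L}}/k$, then $d_{Hofer}(\varphi,\mathrm{id}) < \varepsilon_{\underline{L}}/k$ and Theorem~\ref{thm1} applied to the pair $(\varphi,\mathrm{id})$ would yield $b(\varphi,\underline{L}) = b(\mathrm{id},\underline{L}) = e$, a contradiction. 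Hence $|\varphi|_{Hofer} \ge \varepsilon_{\underline{L}}/k$ for every admissible $\varphi$, and passing to the infimum gives $d_{\underline{L}}(g,h) \ge \varepsilon_{\underline{L}}/k$. Nondegeneracy of $|\cdot|_{\underline{L}}$ is the special case $h = e$: for any $b \ne e$ we have $|b|_{\underline{L}} = d_{\underline{L}}(b,e) \ge \varepsilon_{\underline{L}}/k > 0$.

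For the unboundedness statement under the hypothesis $\eta > 0$, the preceding argument produces only a uniform positive gap, so a separate input is required. The plan is to exhibit a sequence $\{b_n\}_{n\ge 1} \subset \mathcal{B}_k$ with $|b_n|_{\underline{L}} \to \infty$. Since $\eta > 0$, there is enough room to construct, for each $n$, an element $\psi_n \in Ham_{\underline{L}}(\mathbb{D},\omega)$ obtained from $n$ iterations of a Dehn-type twist supported in a fixed region of area at least $\eta$ and preserving $\underline{L}$, so that $b_n := b(\psi_n,\underline{L})$ winds a fixed pair of strands around one another $n$ times. To convert this geometric growth into a lower bound on $|b_n|_{\underline{L}}$, I would appeal to a Hofer-Lipschitz homogeneous quasimorphism on $Ham_{\underline{L}}(\mathbb{D},\omega)$---for instance a Gambaudo--Ghys linking quasimorphism, or a link spectral invariant from \cite{CHMSS} of the kind underlying the proof of Theorem~\ref{thm1}---that factors through $\varphi \mapsto b(\varphi,\underline{L})$ and whose value on $\psi_n$ grows linearly in $n$. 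Verifying that such a quasimorphism is constant on the fibres of the braid-type map, so that it descends to $\mathcal{B}_k$ and can be bounded by a multiple of $|\cdot|_{\underline{L}}$, is the main obstacle; everything else is then routine.
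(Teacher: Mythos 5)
Your treatment of nondegeneracy and of the lower bound $d_{\underline{L}}(g,h)\ge \varepsilon_{\underline{L}}/k$ is correct and is essentially the argument the paper gives, just reorganized: you prove the distance bound directly for an arbitrary pair $g\ne h$ by applying Theorem~\ref{thm1} to $(\varphi,\mathrm{id})$ with $b(\varphi,\underline{L})=gh^{-1}$, and then read off nondegeneracy as the case $h=e$, whereas the paper establishes nondegeneracy first (via a minimizing sequence $\varphi_n$ and Theorem~\ref{thm1}) and then remarks that the same argument yields the distance bound. These are logically equivalent; your version avoids the sequence and is a touch cleaner.

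For the unboundedness claim, however, you have overlooked that this is not something that needs to be re-derived here at all: the paper simply observes that it is a direct consequence of Theorem~1.4 of Morabito's paper \cite{FM}, where (under $\eta>0$) the pseudonorm $|\cdot|_{\underline{L}}$ is already shown to be unbounded by exactly the kind of argument you sketch (iterated twists together with Hofer-Lipschitz link spectral invariants from \cite{CHMSS}). Your proposal instead outlines a plan to rebuild that machinery from scratch---constructing Dehn-type twists $\psi_n$, finding a suitable homogeneous quasimorphism, and proving it descends through the braid-type map---and you candidly flag the descent step as ``the main obstacle.'' As written, that part of your argument is an unfinished sketch rather than a proof; the fix is simply to cite \cite{FM}, Theorem~1.4, which supplies the unbounded sequence directly.
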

\begin{proof}
Suppose that $| b |_{\underline{L}} =0$. By definition,  we have  a sequence of  Hamiltonian symplecticmorphisms  $\varphi_n \in Ham_{\underline{L}}(\mathbb{D}, \omega)$ such that $b= b(\varphi_n, \underline{L}) $ and $ \lim_{n \to \infty}|\varphi_n|_{Hofer}= \lim_{n \to \infty}d_{Hofer}(\varphi_n, id)=0$.  By Theorem \ref{thm1}, we have $b= b(\varphi_n, \underline{L}) =  b(id, \underline{L})$ provided that $n$ is sufficiently large. The braid type $b(id, \underline{L})$ is represented by the constant braid, hence it is the unit in $\mathcal{B}_k$.
The same argument implies that  $d_{\underline{L}}(g, h)  \ge \varepsilon_{\underline{L}} /k$ for $g \ne h.$

The unboundedness of $\mathcal{B}_k$ is a direct consequence of  Theorem 1.4 of \cite{FM}.

\end{proof}

\ack{The author would like to  thank the anonymous referee for his/her detailed comments, suggestions and corrections have greatly improved this paper. The author is supported by the National Natural Science Foundation of China Youth Fund Project, Grant number: 12201106. }

\section{Quantitative Heegaard Floer homology}
In this section, we briefly review  the quantitative Heegaard Floer homology and its filtered version.   
Without loss of generality, we assume that $\int_{\mathbb{S}^2} \omega =1$ throughout.
\begin{definition} \label{def1}
Fix $\eta\ge 0$. A link   $\underline{L}=\cup_{i=1}^k L_i$ on the two-sphere  is called    $\eta$-admissible  if it satisfies the following conditions:
\begin{enumerate}
\item
$ \mathbb{S}^2 -\underline{L} = \cup_{i=1}^{k+1}  \mathring{B}_k$, where $\{\mathring{B}_i\}_{i=1}^k$ is a disjoint union of  open  disks. Let $B_i$ denote the closure  of $\mathring{B}_i$. Then    $\partial B—_i = L_i$ for $1\le  i\le k$, and ${B}_{k+1}$ is a planar domain with $k$ boundaries.
\item
$\lambda = \int_{B_i} \omega $ for $1\le  i\le k$  and $\lambda = 2\eta (k-1) + \int_{B_{k+1}} \omega.$
\end{enumerate}
\end{definition}
We assume that the  link is  $\eta$-admissible throughout.
\begin{remark}
The argument in this  paper still work if we get rid of the  second condition.  Using a little bit of work, the quantitative Heegaard Floer homology is still well defined. See Remark \ref{remark1} for more details.   

However,  say if $\int_{B_i} \omega \ne \int_{B_j} \omega $ for $i \ne j$, then  we must have $\varphi(B_i) =B_i$ because $\varphi \in Ham_{\underline{L}}(\mathbb{D}, \omega)$   preserves the area. 
Then  the Hamiltonian chord  $\{\gamma_i(t)\}_{i=1}^k$ satisfies $\gamma_i(0), \gamma_i(1) \in L_i$.  As a result,  the morphism $Ham_{\underline{L}}(\mathbb{D}, \omega) \to \mathcal{B}_k, \varphi \to b(\varphi, \underline{L})$ is not surjective. This point is pointed out by Morabito in Page 4 of \cite{FM}.
\end{remark}

 Let $X:=Sym^k \mathbb{S}^2$ and  $\Delta :=\{[x_1,..., x_k] \in X : \exists  \  x_i =x_j \mbox{ for } i \ne j\}$  be  the \textbf{diagonal}.  For  fixed points  $z_{i} \in \mathring{B}_{i}$, let 
 $$D_i: =\{[z_i, x_1,...,x_{k-1}] \in X: [x_1,...,x_{k-1}] \in Sym^{k-1} \mathbb{S}^2\}. $$
  Sometimes we write it as $\{z_i\} \times Sym^{k-1} \mathbb{S}^2 \subset X$ when we want to  emphasize  the fixed point $z_i$.  There is a symplectic form $\omega_X$  such that $\omega_X =Sym^k \omega$ outside a neighborhood  of the diagonal.

Let   $V$ be a  small neighborhood of $\Delta\cup D_{k+1}$.   Fix $\varphi=\varphi^1_H \in Ham(\mathbb{S}^2, \omega)$.   A Hamiltonian function  $\underline{H} \in C^{\infty}([0,1] \times X, \mathbb{R})$ is \textbf{compatible with $H$} if it satisfies the following conditions:
\begin{enumerate}
\item
$\underline{H}_t =Sym^k H_t $ outside   $V$.
\item
$\underline{H}_t$ is a ($t$-dependent) constant near $\Delta$.
\item
$\underline{H}_t$ is a ($t$-dependent) constant near  $D_{k+1}$.
\item
$\min_X Sym^kH_t\le  \underline{H}_t  \le \max_X Sym^kH_t$.
\end{enumerate}
The last two conditions  are  not necessary for defining the quantitative  Heegaard Floer homology. 
The propose of the third item is to find holomorphic curves contained in  $Sym^k(\mathbb{S}^2 -\{z_{k+1}\})$.      If the fourth item is true, then  $|\underline{H}|_{(1, \infty)} \le k|H|_{(1, \infty)}$. We use this property in the energy estimate. Note that the function $\underline{H}$ satisfying the four conditions always exists (see Remark 6.8 of \cite{CHMSS}).
\begin{remark}
Since we will consider $\varphi_H^t$ with compact  support in $\mathbb{D}$ and take $z_{k+1}$ outside $\mathbb{D}$, then we can choose $V$ sufficiently small such that $Sym^k \varphi_{{H}}^t(  \underline{L}) \cap V =\emptyset$   for $t \in [0, 1]$.   Therefore, the first condition implies that  $\varphi_{\underline{H}}^t(Sym^k \underline{L}) = Sym^k \varphi^t_H(\underline{L})$.  We assume that this is true throughout.
\end{remark}

Suppose that $\varphi$ is \textbf{nondegenerate} in the sense that $Sym^k\varphi(\underline{L})$ intersects   $Sym^k\underline{L}$ transversely.  Fix a base point $\mathbf{x} \in \underline{L}$. Let $\mathbf{y}: [0,1] \to X$ be a \textbf{Hamiltonian chord,} i.e., $\partial_t \mathbf{y}(t) =X_{\underline{H}} \circ \mathbf{y}(t)$ and $\mathbf{y}(0), \mathbf{y}(1) \in Sym^k \underline{L}$.  A capping  is a smooth  map $\hat{\mathbf{y}} : [0,1]_s \times [0,1]_t \to X$ such that $\hat{\mathbf{y}} (0, t) ={\mathbf{y}}(t) $, $\hat{\mathbf{y}}(1, t) ={\mathbf{x}} $ and $\hat{\mathbf{y}}(s, i) \in Sym^k \underline{L}$, where $i=0, 1$. Let $ \pi_2(\mathbf{x}, \mathbf{y})$ denote the set of cappings. We abuse the same notation  $\hat{\mathbf{y}}$ to denote its  equivalent  class in $\pi_2(\mathbf{x}, \mathbf{y}) / \ker \omega_X. $ 

 Define  a  complex ${CF}( \underline{L}, \underline{H}_t) $  to be the set of  formal  sums of equivalent  classes    of capping in $\pi_2(\mathbf{x}, \mathbf{y}) / \ker \omega_X $
\begin{equation}
 \sum_{(\mathbf{y}, \hat{\mathbf{y}})}  a_{(\mathbf{y}, \hat{\mathbf{y}})}(\mathbf{y}, \hat{\mathbf{y}})
 \end{equation} satisfying that  $a_{(\mathbf{y}, \hat{\mathbf{y}})} \in \mathbb{Z}_2 $ and for any $C\in \mathbb{R}$, there are only finitely $(\mathbf{y}, \hat{\mathbf{y}})$ such that  $\mathcal{A}_H(\mathbf{y}, \hat{\mathbf{y}}) >C$ and  $ a_{(\mathbf{y}, \hat{\mathbf{y}})}  \ne 0$, where  $\mathcal{A}_H$ is a functional on $\pi_2(\mathbf{x}, \mathbf{y})$ defined in $(\ref{eq4})$. 
\begin{definition}
An   $\omega_X$-tame  almost complex structures $J$  is called  {nearly-symmetric} if $J =Sym^k j$ near $ \Delta \cup \cup_{i=1}^{k+1}D_i$, where $j$ is a fixed complex structure on the sphere.
 \end{definition}
 Fix a generic path of nearly-symmetric almost complex structures $\{J_t\}_{t\in [0,1]}$.
A  $X_{\underline{H}_t }$-perturbed holomorphic strip is a solution to the following Floer's equations:
\begin{align}   \label{eq1}
\begin{split}
\left \{
\begin{array}{ll}
 \partial_s u + J_{t}(u)(\partial_t u -X_{\underline{H}_t } \circ u) =0\\
 u(s, 0), u(s, 1) \in Sym^k \underline{L}\\
  \lim_{s \to \pm \infty} u(s, t) =\mathbf{y}_{\pm}(t).
\end{array}
\right.
\end{split}
\end{align}
The moduli  space of index $i$ $X_{\underline{H}_t }$-perturbed holomorphic strips with homotopy class $\beta$ is denoted by $\mathcal{M}_i^{J_t}(\mathbf{y}_+ , \mathbf{y}_-, \beta)$.  Later, we will replace the vector field $X_{\underline{H}_t }$ in Equations (\ref{eq1}) by a Hamiltonian vector field $X$ which depends on $s$ or other parameters.  The solutions are called \textbf{$X$-perturbed holomorphic strips}.

Then the differential is defined by
\begin{equation} \label{eq2}
\partial ({\mathbf{y}}_+, \hat{\mathbf{y}}_+) := \sum_{\beta \in \pi_2(X,  \mathbf{y}_+, \mathbf{y}_-)} \#_2 \left(\mathcal{M}_1^{J_t}(\mathbf{y}_+ , \mathbf{y}_-, \beta) /\mathbb{R} \right) (\mathbf{y}_-,  \hat{\mathbf{y}}_+ \# \beta ).
\end{equation}
The proof of Lemma 6.6 in \cite{CHMSS} shows that $\partial^2=0$. The quantitative  Heegaard Floer homology is denoted by $HF(\underline{L}, H).$

\begin{remark}
In the definition of the capping classes, 
the symplectic form we used is $\omega_X$,  rather than $\omega_X + \eta PD_{\Delta}$.  This is  a subtle difference   comparing with \cite{CHMSS}. The reason is that  later we use the action functional  $\mathcal{A}_H$ (see \ref{eq4}) 
to define the filtration on the chain  complex. If we use $\omega_X + \eta PD_{\Delta}$ to define the capping classes, then the value of $\mathcal{A}_H$ at $(\mathbf{y}, \hat{\mathbf{y}})$  depends on the choice of the representatives    of $\hat{\mathbf{y}}$ because  $\ker(\omega_X + \eta PD_{\Delta}) \ne \ker \omega_X$, unless $\eta=0$.  
If we use  $\omega_X + \eta PD_{\Delta}$ to define the capping classes,  then we need to use the action functional  $\mathcal{A}^{\eta}_H$   (see (52) of \cite{CHMSS}). 

 For both of the definitions, we will end up  with  the same result.  The reason is that finally we only consider a filtered  subcomplex $({CF}^{(a,b)}(\underline{L}, \underline{H}), \partial_{00})$, where $\partial_{00}$ counts the holomorphic strips in $Sym^k(\mathbb{S}^2-\{z_{k+1}\}) -\Delta$, and the  holomorphic strips contributing    to $\partial_{00}$  are the same for using  $\omega_X + \eta PD_{\Delta}$ or  $\omega_X$.  Here ``the same'' also means that the energy of the holomorphic strips computed by       $\omega_X$ and $\omega_X + \eta PD_{\Delta}$ are equal. 
  The only difference is that the ``gap'' of the spectrum  is different.  More precisely, the constant  $\lambda_{\underline{L}}$ obtained  in Lemma 3.5  is different.   The core of our argument is that if $b-a$ is less than  suitable proportion  of $\lambda_{\underline{L}}$,  then  the differential $\partial_{00}$ only counts Morse flow lines,  and  the Floer homology  $\widehat{HF}^{(a,b)}(\underline{L}, \underline{H})$ (the homology of  $({CF}^{(a,b)}(\underline{L}, \underline{H}), \partial_{00})$) is nonvanishing and independent of $\underline{H}$. For two Hamiltonian functions with sufficiently close Hofer distance, the continuous morphism on   $\widehat{HF}^{(a,b)}$  is an isomorphism, and this gives us the desired holomorphic strip.  This argument doesn't rely on the precise value of $\lambda_{\underline{L}}$. 
\end{remark} 

\begin{remark} \label{remark1}
The Floer homology  $HF(\underline{L}, H)$ is still well defined  in a weaker setting that second condition in Definition \ref{def1} is dropped.  The reasons are as follows. 

  Consider  a nonconstant  holomorphic disk $u: (\mathbb{D}, \partial \mathbb{D}) \to (X, Sym^k \underline{L})$.     We claim that $\mu(u) \ge 2$, where $\mu(u)$ is the Maslov index. 
Let $\{u_i\}_{i=1}^{k+1}$ be the generators in  $\pi_2(X, Sym^k \underline{L})$. Note that these generators still satisfy 
$$\mu(u_i) =2 \mbox{ and } u_i \cdot D_j =\delta_{ij}. $$
This is because $\underline{L}$ is smoothly isotopic to an admissible link and the above properties are preserved under the isotopy. 

Write  $[u] =\sum_{i=1}^{k+1} a_i u_i$, where $a_i \in \mathbb{Z}$.  Since $u$ is holomorphic, the intersection positivity implies that $a_i = u \cdot D_i \ge 0$ (need to choose $J=Sym^kj$ near $D_i$). The case  that $a_i=0$ for all $ 0\le i\le  k+1$ cannot happen; otherwise, $u$ is a constant.  Therefore, $\mu(u) =2(\sum_{i=1}^{k+1} a_i) \ge 2$.  The argument here also implies that $\mu(u) =2 $ if and only if $[u]=u_i$ for some $i$. 
 
The property $\mu(u) \ge 2$ allows  us to rule out the bubbles when defining the differential $\partial$.   Also, the  broken holomorphic curves contributing  to $\partial^2$ are either a two level building with two index 1 strips, or  a constant strip attached with a disk bubble.  In the latter case, the disk bubble has Maslov index 2 by index reason.  By the  argument in Lemma 5.1 of \cite{CHMSS},  $\mathcal{M}^J(Sym^k \underline{L}, u_i)$ is a compact manifold of expected dimension, where $\mathcal{M}^J(Sym^k \underline{L}, u_i)$ is the moduli space of holomorphic disks  with one boundary marked point and in the  homology class $u_i$. Let $ev_{i, J}:  \mathcal{M}^J(Sym^k \underline{L}, u_i) \to Sym^k \underline{L} $ be the evaluation maps.  Then the standard  gluing argument implies 
$$\partial^2 (\mathbf{y}, \hat{\mathbf{y}}) = \sum_{i=1}^{k+1} \left(\# ev_{i, J_0}^{-1}(p) - \# ev_{i, J_1}^{-1}(p) \right) (\mathbf{y}, \hat{\mathbf{y}}) =0.$$
Therefore, the homology  $HF(\underline{L}, H)$ is  well defined. 

\end{remark}

\paragraph{Continuous morphisms}
A \textbf{homotopy} from  $(\underline{H}_+, J_{t}^+)$ to  $(\underline{H}_-, J_t^-)$ is a pair consisting of a function $\underline{H}: \mathbb{R}_s \times [0,1]_t \times X \to \mathbb{R} $  and a family of almost complex structures  $\{J_{s,t}\}_{s \in \mathbb{R}, t\in [0,1]}$ such that
\begin{itemize}
\item
 $(\underline{H}_s, J_{s,t}) =(\underline{H}_+, J^+_t)$ when $s \ge 1$ and $(\underline{H}_s, J_{s,t}) =(\underline{H}_-, J_t^-)$ when $s\le -1$,
\item
$\underline{H}_s$ is ($s, t$-dependent) constant near the  diagonal and $D_{k+1}$,
\item
For each  $(s,t) \in \mathbb{R} \times [0,1]$, $J_{s, t}$ is nearly-symmetric.
\end{itemize}
A  typical way of  constructing  $\underline{H}_s$ is to define $$\underline{H}_s =\chi(s) \underline{H}_+ +  (1-\chi(s)) \underline{H}_-, $$ where $\chi: \mathbb{R} \to \mathbb{R}$  is a cut-off function such that $\chi(s) = 1$  when $s \ge 1$ and $\chi(s) =-1$ when $s \le -1$.

Let $\mathcal{M}_i^{J_{s,t}}(\mathbf{y}_+ , \mathbf{y}_-, \beta)$ denote the moduli space of  the   index $i$ $X_{\underline{H}_s}$-perturbed holomorphic strips.
Define a morphism $\phi(\underline{H}_s, J_{s,t}): CF(\underline{L}, \underline{H}_+) \to CF(\underline{L}, \underline{H}_-)$ on chain level by
 \begin{equation*}
\phi(\underline{H}_s, J_{s,t})   ({\mathbf{y}}_+, \hat{\mathbf{y}}_+) := \sum_{\beta \in \pi_2(X, \mathbf{y}_+, \mathbf{y}_-)} \#_2 \mathcal{M}_0^{J_{s,t}}(\mathbf{y}_+ , \mathbf{y}_-, \beta) (\mathbf{y}_-,  \hat{\mathbf{y}}_+ \# \beta ).
\end{equation*}
   The above map induces a homomorphism $\Phi(\underline{H}_+, \underline{H}_-): HF(\underline{L}, H_+) \to HF(\underline{L}, H_-)$.
   Moreover, the homomorphism $\Phi(\underline{H}_+, \underline{H}_-)$ only depend on $(\underline{H}_+, \underline{H}_-)$.  $\Phi(\underline{H}_+, \underline{H}_-)$ is called a \textbf{continuous morphism.}

\paragraph{Filtration}
The action functional on $CF(\underline{L}, \underline{H})$ is defined by
\begin{equation} \label{eq4}
\mathcal{A}_{\underline{H}} ({\mathbf{y}}, \hat{\mathbf{y}}) := -\int \hat{\mathbf{y}}^* \omega_X + \int \underline{H}_t (\mathbf{y}(t))dt.
\end{equation} 
Since the differential decreases  the action,  $CF^{<a}(\underline{L}, \underline{H})$ is a subcomplex.  For $a<b$,  define   a  quotient  subcomplex $CF^{(a, b)}(\underline{L},  \underline{H}) :=CF^{<b}(\underline{L},  \underline{H})/CF^{<a}(\underline{L},  \underline{H})$. We assume that $a, b$ do not belong to the spectrum throughout.

There is another filtration on $CF(\underline{L}, \underline{H})$ induced by the intersection numbers with $\Delta$ and $D_{k+1}$.  Define
  \begin{equation*}
\partial_{ij} ({\mathbf{y}}_+, \hat{\mathbf{y}}_+) := \sum_{\beta \in \pi_2(X, \mathbf{y}_+, \mathbf{y}_-), \beta \cdot D_{k+1} =i,  \beta \cdot \Delta = j} \#_2 \left(\mathcal{M}_1^{J_t}(\mathbf{y}_+ , \mathbf{y}_-, \beta) /\mathbb{R} \right) (\mathbf{y}_-,  \hat{\mathbf{y}}_+ \# \beta ).
\end{equation*}
Note that $u$ is $Sym^k j$-holomorphic near $\Delta$  and $D_{k+1}$ because $X_{\underline{H}_t }=0$  there.  Also, $\Delta$ and $D_{k+1}$ are codimension two complex  varieties.  Therefore, $u\cdot \Delta \ge 0$ and $u\cdot D_{k+1} \ge 0$.
By definition, we have
\begin{equation*}
\partial= \partial_{00} + \partial_{10}+\partial_{01} ....
\end{equation*}
Then $0=\partial^2=\partial_{00}^2 + \partial_{10} \partial_{00}+ \partial_{00} \partial_{10} +...$ implies that $\partial_{00}^2=0$,  $ \partial_{10} \partial_{00}+ \partial_{00} \partial_{10} =0 $, etc.
Define $\widehat{HF}(\underline{L}, \underline{H})$ to be the homology of  $(CF(\underline{L}, \underline{H}), \partial_{00})$. Roughly speaking, $\widehat{HF} (\underline{L}, \underline{H})$ is the Lagrangian Floer homology of $Sym^k \underline{L}$ in $Sym^k(\mathbb{S}^2-\{z_{k+1}\})- \Delta$.

 It is easy to show that $\partial_{00}$ decreases  the action functional. The homology of   $(CF^{(a, b)}(\underline{L}, \underline{H}), \partial_{00})$ is denoted by $\widehat{HF}^{(a, b)}(\underline{L}, H)$.

Similarly, we have $ \phi(\underline{H}_s, J_{s,t})= \phi_{00}(\underline{H}_s, J_{s,t}) + \phi_{10}(\underline{H}_s, J_{s,t}) +\phi_{01}(\underline{H}_s, J_{s,t}) ...$. The chain map condition  $\partial \circ \phi =\phi \circ \partial $ implies that
\begin{equation*}
\sum_{i+i'=k} \sum_{j+j' =l} \left(\partial_{ij} \circ \phi_{i'j'}(\underline{H}_s, J_{s,t}) - \phi_{i'j'}(\underline{H}_s, J_{s,t}) \circ \partial_{ij} \right) =0
\end{equation*}
for each nonnegative integers $k, l$.  Here $i, j, i', j'$ are nonnegative. In particular, we have $$\partial_{00} \circ  \phi_{00}(\underline{H}_s, J_{s,t}) =\phi_{00} (\underline{H}_s, J_{s,t})\circ \partial_{00}. $$  The following lemma tells us that  $\phi_{00}(\underline{H}_s, J_{s,t})$ induces a homomorphism $$\Phi_0(\underline{H}_+, \underline{H}_-): \widehat{HF}(\underline{L}, H_+) \to \widehat{HF}(\underline{L}, H_-) $$  that only depends  on $(\underline{H}_+, \underline{H}_-)$. Moreover,   $ \widehat{HF}(\underline{L}, H)$ is independent of the choice of $H$ and almost complex structures.

\begin{lemma} \label{lem2}
The continuous morphisms satisfy the following properties:
\begin{enumerate}
\item (Chain homotopy)
Let $(\underline{H}^0_s, J^0_{s, t})$ and  $(\underline{H}^1_s, J^1_{s, t})$ be two generic homotopies from $(\underline{H}_+, J^+_t)$ to $(\underline{H}_-, J^-_t)$.   
Then there exists  a homomorphism
$K_{00}: CF(\underline{L}, \underline{H}_+) \to CF(\underline{L}, \underline{H}_-)$  such  that
$$\phi_{00}(\underline{H}_s^1, J_{s,t}^1)- \phi_{00}(\underline{H}_s^0, J_{s,t}^0) =\partial_{00} \circ K_{00} + K_{00} \circ \partial_{00}.$$

\item (Composition rule)
Let $(\underline{H}^0_s, J^0_{s, t})$  be a homotopy from $(\underline{H}_+, J^+_t)$ to $(\underline{H}_0, J^0_t)$ and  $(\underline{H}^1_s, J^1_{s, t})$  be a homotopy from  $(\underline{H}_0, J^0_t)$ to $(\underline{H}_-, J^-_t)$.  
 Concatenating $(\underline{H}^0_s, J^0_{s, t})$  and $(\underline{H}^1_s, J^1_{s, t})$  produces a homotopy  $(\underline{H}^1_s\circ \underline{H}^0_s, J_{s,t}^{1} \circ J^{0}_{s,t})$ from   $(\underline{H}_+, J^+_t)$ to $(\underline{H}_-, J^-_t)$.  Then there exists a homomorphism
$K_{00}: CF(\underline{L}, H_+) \to CF(\underline{L}, H_-)$ such  that
$$\phi_{00}(\underline{H}^1_s, J_{s,t}^1 )\circ \phi_0(\underline{H}^0_s, J_{s,t}^0) =\phi_{00}( \underline{H}^1_s\circ \underline{H}^0_s, J_{s,t}^{1} \circ J^{0}_{s,t}) + \partial_{00} \circ K_{00} + K_{00} \circ \partial_{00}.$$
\end{enumerate}

\end{lemma}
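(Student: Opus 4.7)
The plan is to prove (1) and (2) by establishing the corresponding chain-homotopy identity for the \emph{full} chain map $\phi = \phi_{00} + \phi_{10} + \phi_{01} + \cdots$ via a standard 1-parameter moduli argument, and then extracting the $(0,0)$ bi-degree component. The key structural observation is that since every $J_{s,t}$ (and each parametrized $J^r_{s,t}$ used below) equals $Sym^k j$ near $\Delta \cup D_{k+1}$, intersection positivity gives $u \cdot \Delta \ge 0$ and $u \cdot D_{k+1} \ge 0$ for every strip $u$ in each parametrized moduli space. Consequently any chain-level map defined by counting such strips splits into bi-degree components by $(\beta \cdot D_{k+1},\, \beta \cdot \Delta)$, and every identity among such maps splits into identities in each bi-degree.

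For (1), I would choose a generic 1-parameter family $\{(\underline{H}^r_s, J^r_{s,t})\}_{r \in [0,1]}$ of homotopies between the two given ones, with $J^r_{s,t}$ nearly-symmetric and with fixed ends $(\underline{H}_\pm, J^\pm_t)$. Let $\mathcal{M}^{\mathrm{par}}_{-1}(\mathbf{y}_+, \mathbf{y}_-, \beta)$ denote the parametrized moduli space of index $-1$ $X_{\underline{H}^r_s}$-perturbed strips and set
$$K({\mathbf{y}}_+, \hat{\mathbf{y}}_+) := \sum_{\beta} \#_2 \, \mathcal{M}^{\mathrm{par}}_{-1}(\mathbf{y}_+, \mathbf{y}_-, \beta)\, (\mathbf{y}_-, \hat{\mathbf{y}}_+ \# \beta).$$
Standard boundary analysis of the 1-dimensional parametrized moduli space of index $0$ strips then yields the full identity $\phi(\underline{H}^1_s, J^1_{s,t}) - \phi(\underline{H}^0_s, J^0_{s,t}) = \partial K + K\partial$, and extracting its $(0,0)$ component gives $K_{00}$ with the required property. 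For (2), I would replace the interpolation by a neck-stretching family: for $R \in [R_0, \infty)$, let $(\underline{H}^R_s, J^R_{s,t})$ be the concatenation of $(\underline{H}^0_s, J^0_{s,t})$ and $(\underline{H}^1_s, J^1_{s,t})$ separated by a gap of length $R$, so that $R = R_0$ recovers $(\underline{H}^1_s \circ \underline{H}^0_s, J^1_{s,t} \circ J^0_{s,t})$ and, by the usual gluing theorem, the $R \to \infty$ count computes the composition $\phi(\underline{H}^1_s, J^1_{s,t}) \circ \phi(\underline{H}^0_s, J^0_{s,t})$. The same parametrized-moduli argument then yields $\phi(\underline{H}^1_s) \circ \phi(\underline{H}^0_s) - \phi(\underline{H}^1_s \circ \underline{H}^0_s) = \partial K + K \partial$, and extracting its $(0,0)$ piece gives the stated identity.

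The main obstacle is compactness of these parametrized moduli spaces, i.e., controlling disk and sphere bubbling. Here Remark \ref{remark1} does the work: every non-constant holomorphic disk $u:(\mathbb{D},\partial\mathbb{D}) \to (X, Sym^k\underline{L})$ has $\mu(u) \ge 2$ with equality only on the tautological classes $u_i$, and non-constant holomorphic spheres have $\mu \ge 4$. Hence bubbling is codimension $\ge 2$ in every parametrized moduli space under consideration, so does not occur generically in the $0$-dimensional strata used to define $K$; any residual boundary contributions to the $1$-dimensional strata from Maslov-$2$ disk bubbles cancel by exactly the evaluation-map degree argument given at the end of Remark \ref{remark1}. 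Granting these standard compactness facts, the remainder of the proof is the routine bi-degree bookkeeping described above.
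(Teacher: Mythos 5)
Your proposal follows essentially the same route as the paper: invoke the standard parametrized-moduli / gluing arguments to get the full identity $\phi^1 - \phi^0 = \partial K + K\partial$ (resp.\ the composition identity), observe that the vanishing of $X_{\underline{H}^\tau_s}$ together with $J^\tau_{s,t} = Sym^k j$ near $\Delta \cup D_{k+1}$ forces $u\cdot\Delta \ge 0$ and $u\cdot D_{k+1} \ge 0$ so that $K$ and the identity decompose by bi-degree, and then read off the $(0,0)$ component. You supply somewhat more detail than the paper (explicitly citing Remark~\ref{remark1} for compactness, and spelling out the neck-stretching family for (2)), but the substance is the same; the one small imprecision is that your ``key structural observation'' sentence attributes intersection positivity to the $J = Sym^k j$ condition alone, whereas one also needs $X_{\underline{H}^\tau_s} = 0$ near the divisors so that the perturbed strips are honestly holomorphic there.
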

\begin{proof}
Let $\{(\underline{H}_s^{\tau}, J_{s,t}^{\tau})\}_{\tau \in [0,1]}$ be a  generic homotopy between  $(\underline{H}^0_s, J^0_{s, t})$  and  $(\underline{H}^1_s, J^1_{s, t})$ such that  $\underline{H}_s^{\tau}$ is constant and $J^{\tau}_{s,t} =Sym^k j$ near $\Delta$ and $D_{k+1}$.  In the usual Lagrangian Floer theory, we have
\begin{equation} \label{eq3}
\phi(\underline{H}_s^1, J_{s,t}^1)- \phi(\underline{H}_s^0, J_{s,t}^0) =\partial \circ K + K \circ \partial,
\end{equation}
where $K$ is defined by counting  the  $X_{\underline{H}^{\tau}_s}$-perturbed strips with index $-1$.   Since $X_{\underline{H}^{\tau}_s} =0$ and $J_{s,t}^{\tau} =Sym^k j$ near $\Delta$ and $D_{k+1}$, we still have $u \cdot \Delta \ge 0$ and $ u \cdot D_{k+1} \ge 0$. Therefore, we can decompose $K =K_{00} + K_{10} + K_{01} ...$.  Then  Equation (\ref{eq3}) implies that
\begin{equation*}
\phi_{kl}(\underline{H}_s^1, J_{s,t}^1)- \phi_{kl}(\underline{H}_s^0, J_{s,t}^0)  = \sum_{i+i'=k} \sum_{j+j'=l} \left(\partial_{ij} \circ K_{i'j'} - K_{i'j'} \circ \partial_{ij} \right).
\end{equation*}
The first statement is just the special case that $k=l=0$.

The proof of the second statement is similar.
\end{proof}

\begin{remark}
By P. Biran and O. Cornea's criterions (Proposition 6.1.4 of \cite{BC})   and the computations in \cite{CHMSS}, one should  expect that $\widehat{HF}(\underline{L}, H)=0. $ But we don't need this result because we only use the filtered version   $\widehat{HF}^{(a, b)}(\underline{L}, H)$.
\end{remark}

 \begin{remark}
In \cite{GHC}, the author gives an alternative formulation   of $HF(\underline{L}, H)$ by counting the  holomorphic curves in  $\mathbb{R} \times [0,1] \times \Sigma$ which are  called HF curves.
Under the tautological correspondence, the intersection  number $u \cdot \Delta $ corresponds to the $J_0$ index of HF curves in $\mathbb{R} \times [0,1] \times \Sigma$ (see Proposition 4.2 of \cite{GHC}). The $J_0$ index is the counterpart of Hutchings's one in ECH setting  that measures the  Euler characteristic of holomorphic curves \cite{H1, H2}.  If an HF curve has zero $J_0$ index, then it is a disjoint union of holomorphic strips.
\end{remark}

\section{Proof of Theorem \ref{thm1}}
To apply the quantitative Heegaard Floer homology, we need to perturb $\varphi^1_{\underline{H}}$ 
 so that it is nondegenerate. Fix a perfect Morse function  $f_{\underline{L}}: Sym^k \underline{L} \to \mathbb{R}$. Extend $f_{\underline{L}}$  to a   smooth function $f$ on $X$ such that
\begin{itemize}
\item
$f$ is Morse outside a small neighborhood of $\Delta$ and $D_{k+1}$;
\item
A critical point of $f_{\underline{L}}$ is also a critical point of $f$;
\item
$f$ is a  constant near $\Delta$ and $D_{k+1}$.
\end{itemize}
Denote
$\underline{H} \# \epsilon f = \underline{H} +  \epsilon f\circ (\varphi^t_{\underline{H} })^{-1}$ by  $\underline{H}^{\epsilon}$. Note that $\underline{H}^{\epsilon}$  generates  $\varphi^t_{\underline{H}} \circ \varphi_{\epsilon f}^t$ and  $\underline{H}^{\epsilon}$  is  constant near $\Delta$ and $D_{k+1}$.  The next two lemmas tell us that the chain complex $(CF(\underline{L}, \underline{H}^{\epsilon}), \partial)$ is identical to   $(CF(\underline{L},  {\epsilon} f), \partial)$ provided that $\epsilon$ is sufficiently small.

\begin{lemma}
There exists a constant $\epsilon_{\underline{L}, f}^1>0$ depending only on the link and $f$ such that for   $0<\epsilon \le \epsilon_{\underline{L}, f}^1$, $\varphi_{\underline{H}}^1 \circ \varphi_{\epsilon f}^1$ is nondegenerate.  Also, the Hamiltonian chords of  $\underline{H}^{\epsilon}$ are  of the form $\mathbf{y}(t) =\varphi^t_{\underline{H}}(\mathbf{x})$, where $\mathbf{x} \in Crit(f_{\underline{L}})$.
\end{lemma}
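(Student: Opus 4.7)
The plan is to reduce the chord problem for $\underline{H}^{\epsilon}$ to a Lagrangian intersection problem for the time-$1$ flow of $\epsilon f$ alone, and then to invoke the Weinstein neighborhood theorem together with the Morse property of $f_{\underline{L}}$.

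First, using that $\underline{H}^{\epsilon}$ generates the composition $\varphi^t_{\underline{H}} \circ \varphi^t_{\epsilon f}$, I would introduce the change of variables $\mathbf{z}(t) := (\varphi^t_{\underline{H}})^{-1}(\mathbf{y}(t))$. A direct computation then gives $\partial_t \mathbf{z} = \epsilon X_f(\mathbf{z})$, so $\mathbf{z}(t) = \varphi^t_{\epsilon f}(\mathbf{z}(0))$. Writing $L := Sym^k \underline{L}$ and using $\varphi^t_{\underline{H}}(L) = L$ for $t=0,1$ (from the remark preceding this lemma, since $\varphi^1_H(\underline{L}) = \underline{L}$), the boundary conditions $\mathbf{y}(0), \mathbf{y}(1) \in L$ translate to $\mathbf{z}(0), \mathbf{z}(1) \in L$. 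Hence Hamiltonian chords of $\underline{H}^{\epsilon}$ are in bijective correspondence with points $\mathbf{x}_0 \in L \cap (\varphi^1_{\epsilon f})^{-1}(L)$ via $\mathbf{y}(t) = \varphi^t_{\underline{H}} \circ \varphi^t_{\epsilon f}(\mathbf{x}_0)$, and transversality of the corresponding intersection is equivalent to nondegeneracy of $\varphi^1_{\underline{H}} \circ \varphi^1_{\epsilon f}$ at $\varphi^1_{\underline{H}}(\mathbf{x}_0)$.

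Next I would analyze the intersection $L \cap (\varphi^1_{\epsilon f})^{-1}(L)$ by a Weinstein neighborhood argument. Fix a symplectic identification of a neighborhood $U$ of $L$ in $X$ with a neighborhood of the zero section in $T^*L$. For $\epsilon$ small enough (depending only on $f$ and $U$, hence on $\underline{L}$ and $f$), $\varphi^1_{\epsilon f}(L) \subset U$ and is the graph of an exact 1-form $dG_{\epsilon}$ on $L$, with $G_{\epsilon} = \epsilon f_{\underline{L}} + O(\epsilon^2)$ in $C^2$ norm; this is a standard first-order Taylor expansion of the Hamiltonian flow normal to $L$, using that $f|_L = f_{\underline{L}}$. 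Each $\mathbf{x} \in \mathrm{Crit}(f_{\underline{L}}) \subset \mathrm{Crit}(f)$ is a fixed point of $\varphi^t_{\epsilon f}$, so it automatically lies in the intersection and produces the chord $\mathbf{y}(t) = \varphi^t_{\underline{H}}(\mathbf{x})$. Conversely, since $f_{\underline{L}}$ is Morse, the implicit function theorem applied at each of its finitely many nondegenerate critical points shows that, for $\epsilon$ small, the zero set of $dG_{\epsilon}$ consists of exactly these points, and the intersection of $\varphi^1_{\epsilon f}(L)$ with $L$ is transverse there. Finally, applying the symplectomorphism $\varphi^1_{\underline{H}}$, which preserves $L$, preserves both the intersection count and transversality, and yields nondegeneracy of $\varphi^1_{\underline{H}} \circ \varphi^1_{\epsilon f}$.

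The main subtlety is to ensure that $\epsilon_{\underline{L}, f}^1$ depends only on the link and $f$, not on $\underline{H}$. This is automatic from the reduction: after the change of variables, $\underline{H}$ enters only via the bijective relabeling $\mathbf{x}_0 \mapsto \varphi^1_{\underline{H}}(\mathbf{x}_0)$, while the Weinstein neighborhood, the expansion $G_{\epsilon} = \epsilon f_{\underline{L}} + O(\epsilon^2)$, and the implicit function theorem step all occur in the intersection problem for $\varphi^1_{\epsilon f}$, which involves only $f$ and $L$.
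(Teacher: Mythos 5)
Your proof is correct and takes essentially the same approach as the paper: both reduce Hamiltonian chords of $\underline{H}^{\epsilon}$ to the Lagrangian intersection $Sym^k\underline{L} \cap (\varphi^1_{\epsilon f})^{-1}(Sym^k \underline{L})$ using $\varphi^1_{\underline{H}}(Sym^k \underline{L}) = Sym^k \underline{L}$, so that $\underline{H}$ drops out and $\epsilon^1_{\underline{L},f}$ depends only on $\underline{L}$ and $f$. The only difference is that where the paper delegates the final step (for $\epsilon$ small the intersection is transverse and equals $\mathrm{Crit}(f_{\underline{L}})$) to the proof of Lemma 6.1 of \cite{GHC}, you spell it out directly via the standard Weinstein-neighborhood/implicit-function-theorem argument.
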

\begin{proof}
For any $\mathbf{x}\in Sym^k \underline{L}$, $\varphi_{\underline{H}}^1 \circ \varphi_{\epsilon f}^1(\mathbf{x}) \in Sym^k \underline{L}$ if and only if $\mathbf{x} \in Sym^k\underline{L} \cap (\varphi_{\epsilon f}^1)^{-1}(Sym^k\underline{L}) $ because $\varphi_H^1(\underline{L}) =\underline{L}$. Also,  note that $\varphi_{\underline{H}}^1 \circ \varphi_{\epsilon f}^1$ is nondegenerate if and only if $(\varphi_{\epsilon f}^1)^{-1}(Sym^k\underline{L}) $ intersects  $ Sym^k\underline{L} $ transversality.
By the proof of Lemma 6.1 of \cite{GHC}, there exists $\epsilon_{\underline{L}, f}^1>0$ such that if $0< \epsilon \le \epsilon_{\underline{L}, f}^1$, then  $(\varphi_{\epsilon f}^1)^{-1}(Sym^k\underline{L}) $ intersects  $ Sym^k\underline{L} $  transversally and  $\mathbf{x} \in Crit(f_{\underline{L}})$.

Since $\mathbf{x}$ is also a critical point of $f$,  we have $ \varphi_{\epsilon f}^t(\mathbf{x}) =\mathbf{x}$. Therefore, the Hamiltonian chord satisfies $ \varphi^t_{\underline{H}} \circ  \varphi_{\epsilon f}^t(\mathbf{x})  =\varphi^t_{\underline{H}}(\mathbf{x}).$

\end{proof}

Let  $\mathbf{x}  \in Sym^k \underline{L}$ be  a critical point of $f_{\underline{L}}$.    Using the Hamiltonian chord  of $\varphi_{\underline{H}}^t \circ \varphi_{\epsilon f}^t(\mathbf{x})$, we construct a braid $\bar{\gamma}_{\varphi \circ \varphi_{\epsilon f}^1}$ as before. Because $ \varphi_{\underline{H}}^t \circ \varphi_{\epsilon f}^t(\mathbf{x}) =  \varphi_{\underline{H}}^t (\mathbf{x})$,  the braid type   of $\bar{\gamma}_{\varphi \circ \varphi_{\epsilon f}^1}$  still  is $b(\varphi, \underline{L})$.

\begin{lemma} \label{lem5}
Assume that  $0<\epsilon \le \epsilon_{\underline{L}, f}^1$. Let  $\mathbf{y}_{\pm}(t) =\varphi^t_{\underline{H}}(\mathbf{x}_{\pm})$, where $\mathbf{x}_{\pm} \in Crit(f_{\underline{L}})$. Fix a  nearly symmetric almost complex structure $J_0$. Let $J_t := (\varphi_{\underline{H}}^t)_* \circ J_0 \circ  (\varphi_{\underline{H}}^t)_*^{-1}$. Then there is a bijection
$$\Psi: \mathcal{M}^{J_t}(\mathbf{y}_+, \mathbf{y}_-) \to  \mathcal{M}^{J_0}(\mathbf{x}_+, \mathbf{x}_-),$$
where $\mathcal{M}^{J_t}(\mathbf{y}_+, \mathbf{y}_-)$ is the moduli space of  $X_{\underline{H}^{\epsilon}}$-strips and  $ \mathcal{M}^{J_0}(\mathbf{x}_+, \mathbf{x}_-)$ is the moduli space of $X_{\epsilon f}$-strips. Moreover, $\Psi$ preserves the energy in the sense that
\begin{equation*}
\int u^*\omega_X  + \int \underline{H}^{\epsilon}(t, \mathbf{y}_+(t))dt -  \int \underline{H}^{\epsilon}(t, \mathbf{y}_-(t))dt = \int \Psi(u)^*\omega_X + \epsilon f(\mathbf{x}_+) - \epsilon f(\mathbf{x}_-).
\end{equation*}
\end{lemma}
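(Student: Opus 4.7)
The plan is to define the bijection $\Psi$ explicitly by the natural gauge change that trivializes the Hamiltonian isotopy $\varphi_{\underline{H}}^t$. Given $u \in \mathcal{M}^{J_t}(\mathbf{y}_+, \mathbf{y}_-)$, set
\[
\Psi(u)(s,t) := (\varphi_{\underline{H}}^t)^{-1}(u(s,t)).
\]
The inverse map sends $v \in \mathcal{M}^{J_0}(\mathbf{x}_+,\mathbf{x}_-)$ to $u(s,t) := \varphi_{\underline{H}}^t(v(s,t))$, so once we verify that $\Psi$ maps one moduli space to the other, bijectivity is automatic.

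The first step is to transform the Floer equation. Writing $u = \varphi_{\underline{H}}^t \circ v$ and applying the chain rule gives $\partial_s u = d\varphi_{\underline{H}}^t(\partial_s v)$ and $\partial_t u = X_{\underline{H}}(u) + d\varphi_{\underline{H}}^t(\partial_t v)$. Since $\varphi_{\underline{H}}^t$ is a symplectomorphism, one has the standard identity $X_{\epsilon f\circ(\varphi_{\underline{H}}^t)^{-1}}(u) = (\varphi_{\underline{H}}^t)_* X_{\epsilon f}(v)$; combining with $X_{\underline{H}^{\epsilon}} = X_{\underline{H}} + X_{\epsilon f\circ(\varphi_{\underline{H}}^t)^{-1}}$ yields $\partial_t u - X_{\underline{H}^{\epsilon}}(u) = d\varphi_{\underline{H}}^t(\partial_t v - X_{\epsilon f}(v))$. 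Substituting this and the definition $J_t = (\varphi_{\underline{H}}^t)_*\circ J_0 \circ (\varphi_{\underline{H}}^t)_*^{-1}$ into (\ref{eq1}), the whole equation factors through $d\varphi_{\underline{H}}^t$, leaving exactly the $X_{\epsilon f}$-perturbed Floer equation with respect to $J_0$ for $v$. The boundary condition $v(s,i) \in Sym^k \underline{L}$ follows from $\varphi_{\underline{H}}^i(Sym^k \underline{L}) = Sym^k \underline{L}$ for $i=0,1$, and the asymptotic condition $v(\pm\infty,t) = \mathbf{x}_{\pm}$ follows from $\mathbf{y}_{\pm}(t) = \varphi_{\underline{H}}^t(\mathbf{x}_{\pm})$.

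For the energy identity, I would compute $u^*\omega_X$ pointwise. Expanding $\omega_X(\partial_s u,\partial_t u)$ with the formulas above, the cross term simplifies by $\omega_X(V,X_{\underline{H}}) = -d\underline{H}_t(V)$ and the pure-$v$ term is $\omega_X(\partial_s v,\partial_t v)$ because $\varphi_{\underline{H}}^t$ preserves $\omega_X$. This gives
\[
\omega_X(\partial_s u,\partial_t u) = \omega_X(\partial_s v,\partial_t v) - \partial_s\bigl(\underline{H}_t(u(s,t))\bigr),
\]
where in the last term $t$ is held fixed. Integrating over $\mathbb{R}_s\times[0,1]_t$ and using the asymptotic limits of $u$ produces
\[
\int u^*\omega_X = \int v^*\omega_X + \int_0^1 \bigl[\underline{H}_t(\mathbf{y}_-(t)) - \underline{H}_t(\mathbf{y}_+(t))\bigr]\,dt.
\]
Finally, because $\mathbf{x}_{\pm}$ are critical points of $f$ we have $\underline{H}^{\epsilon}_t(\mathbf{y}_{\pm}(t)) = \underline{H}_t(\mathbf{y}_{\pm}(t)) + \epsilon f(\mathbf{x}_{\pm})$, and rearranging gives the claimed identity.

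I do not expect a serious obstacle: the argument is the classical gauge-change trick in Floer theory. The only subtleties are bookkeeping, namely (i) that the conjugated almost complex structure $J_t$ is still nearly-symmetric because $\varphi_{\underline{H}}^t$ is the identity in a neighborhood of $\Delta\cup D_{k+1}$ where $\underline{H}_t$ is locally constant, and (ii) that the sign conventions in the energy calculation match those fixed by (\ref{eq4}). Both can be verified directly without any additional analytic input.
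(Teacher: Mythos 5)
Your proposal is correct and follows the same gauge-change argument as the paper: same explicit bijection $v(s,t) = (\varphi_{\underline{H}}^t)^{-1}(u(s,t))$, same transformation of the Floer equation via $X_{\underline{H}^{\epsilon}} = X_{\underline{H}} + (\varphi_{\underline{H}}^t)_*(X_{\epsilon f} \circ (\varphi_{\underline{H}}^t)^{-1})$, and an equivalent energy computation (the paper substitutes $X_{\underline{H}^{\epsilon}}$ before integrating, you substitute $\underline{H}^{\epsilon}(t,\mathbf{y}_{\pm}(t)) = \underline{H}(t,\mathbf{y}_{\pm}(t)) + \epsilon f(\mathbf{x}_{\pm})$ afterward; these are trivially the same).
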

\begin{proof}
For $u \in  \mathcal{M}^{J_t}(\mathbf{y}_+, \mathbf{y}_-)$, define the map $\Psi$  by sending $u$ to $v(s,t) : =(\varphi_{\underline{H}}^t)^{-1}(u(s,t ))$.  Note that $\lim_{s \to \pm \infty }v(s, t) =(\varphi_{\underline{H}}^t)^{-1}( \mathbf{y}_{\pm}(t)) = \mathbf{x}_{\pm}, $ $v(s, 0) = u(s, 0) \subset Sym^k{\underline{L}}$, and $v(s, 1) =(\varphi_{\underline{H}}^1)^{-1} (u(s, 1)) \subset (\varphi_{\underline{H}}^1)^{-1}(Sym^k{\underline{L}}) = Sym^k{\underline{L}}$.

By a direct computation, we have
 \begin{equation*}
\begin{split}
& \partial_s v = (\varphi_{\underline{H}}^t)^{-1}_*(\partial_su),  \partial_t v = (\varphi_{\underline{H}}^t)^{-1}_*(\partial_t u - X_{\underline{H}} \circ u) \\
&X_{\underline{H}^{\epsilon }} = X_{\underline{H} }  +  (\varphi_{\underline{H}}^t)_*(X_{\epsilon f} \circ  (\varphi_{\underline{H}}^t)^{-1}).
\end{split}
 \end{equation*}
 Therefore, $\partial_s v + J_0(v)(\partial_t v -X_{\epsilon f} \circ v) = (\varphi_{\underline{H}}^t)^{-1}_*( \partial_s u + J_t(u) (\partial_t u -X_{ \underline{H}^{\epsilon}} \circ u)) = 0$.

In sum, $\Psi$ is well-defined. Obviously, it is a bjiection.  Let $v=\Psi(u)$. We have
 \begin{equation*}
\begin{split}
\int u^* \omega_X& =\int \omega_X \left( (\varphi_{\underline{H}}^t)_*(\partial_s v),  (\varphi_{\underline{H}}^t)_*(\partial_t v) + X_{\underline{H}} \circ u \right) ds \wedge dt \\
&= \int v^* \omega_X + \int \omega_X( (\varphi_{\underline{H}}^t)_*(\partial_s v), X_{\underline{H}^{\epsilon }} \circ u- (\varphi_{\underline{H}}^t)_*(X_{\epsilon f} \circ v ) ) ds \wedge dt \\
&= \int v^* \omega_X +   \int \omega_X(  \partial_su, X_{\underline{H}^{\epsilon }} \circ u) ds \wedge dt   - \int \omega_X(\partial_s v, X_{\epsilon f} \circ v) ds \wedge dt \\
&= \int v^* \omega_X -  \int  d{\underline{H}^{\epsilon }} (\partial_s u) ds \wedge dt  - \int   d({\epsilon f} ) (\partial_s v) ds \wedge dt \\
&= \int v^* \omega_X - \int \underline{H}^{\epsilon}(t, \mathbf{y}_+(t))dt  +  \int \underline{H}^{\epsilon}(t, \mathbf{y}_-(t))dt + \epsilon f(\mathbf{x}_+) - \epsilon f(\mathbf{x}_-).
\end{split}
 \end{equation*}
So $\Psi$ preserves the energy.
\end{proof}

The next lemma describes  how the    filtered complexes change under the continuous morphisms.  The proof is just the standard energy estimates. 
\begin{lemma} \label{lem1}
Fix $a< b$.  Let $H_{\pm}$ be Hamiltonian functions with compact  support. Suppose that  $|H_+-H_-|_{(1, \infty)} < \varepsilon/2k$ and $\epsilon|f|_{C^0} < \varepsilon/4$, then the continuous morphism  $\phi_{00}(\underline{H}_s, J_{s,t})$ descends to the following maps:
\begin{equation*}
\begin{split}
&\phi_{00}(\underline{H}_s, J_{s,t}): CF^{<a}(\underline{L}, \underline{H}_+^{\epsilon}) \to CF^{<a+ \varepsilon}(\underline{L}, \underline{H}^{\epsilon}_-) \\
&\phi_{00}(\underline{H}_s, J_{s,t}): CF^{(a, b)}(\underline{L}, \underline{H}^{\epsilon}_+) \to CF^{(a+ \varepsilon, b + \varepsilon)}(\underline{L}, \underline{H}^{\epsilon}_-).\\
\end{split}
\end{equation*}
\end{lemma}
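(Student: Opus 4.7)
The plan is to carry out the standard action--energy estimate for a linear homotopy, adapted to our setting where the Hamiltonians on $X=Sym^k\mathbb{S}^2$ are lifted from Hamiltonians on $\mathbb{S}^2$. First I pick a smooth monotone cut-off $\chi:\mathbb{R}\to[0,1]$ with $\chi\equiv 1$ for $s\ge 1$ and $\chi\equiv 0$ for $s\le -1$, and set $\underline{H}_s^\epsilon:=\chi(s)\underline{H}_+^\epsilon+(1-\chi(s))\underline{H}_-^\epsilon$, so that $\partial_s\underline{H}_s^\epsilon=\chi'(s)(\underline{H}_+^\epsilon-\underline{H}_-^\epsilon)$ with $\chi'\ge 0$ and $\int\chi'\,ds=1$. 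For any $X_{\underline{H}_s^\epsilon}$-perturbed strip $u$ from $\mathbf{y}_+$ (at $s=+\infty$) to $\mathbf{y}_-$ (at $s=-\infty$), the Floer equation, the identity $\omega_X(X_{\underline{H}},\,\cdot\,)=d\underline{H}$, and the concatenation convention $\hat{\mathbf{y}}_-=\hat{\mathbf{y}}_+\#u$ will combine to give the standard identity
\begin{equation*}
\mathcal{A}_{\underline{H}_-^\epsilon}(\mathbf{y}_-,\hat{\mathbf{y}}_-)-\mathcal{A}_{\underline{H}_+^\epsilon}(\mathbf{y}_+,\hat{\mathbf{y}}_+)=-E(u)-\int_{-\infty}^{\infty}\int_0^1(\partial_s\underline{H}_s^\epsilon)(u(s,t))\,dt\,ds,
\end{equation*}
where $E(u)=\int|\partial_s u|_J^2\,ds\,dt\ge 0$. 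Discarding $-E(u)\le 0$ and using $\chi'\ge 0$ then yields
\begin{equation*}
\mathcal{A}_{\underline{H}_-^\epsilon}(\mathbf{y}_-,\hat{\mathbf{y}}_-)-\mathcal{A}_{\underline{H}_+^\epsilon}(\mathbf{y}_+,\hat{\mathbf{y}}_+)\le \int_0^1\max_X(\underline{H}_-^\epsilon-\underline{H}_+^\epsilon)_t\,dt.
\end{equation*}

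Next I would control the right-hand side by $\varepsilon$. Since $\underline{H}_\pm^\epsilon=\underline{H}_\pm+\epsilon f\circ(\varphi_{\underline{H}_\pm}^t)^{-1}$, the contribution of the perturbation term will be at most $2\epsilon|f|_{C^0}<\varepsilon/2$. For the remaining piece $\underline{H}_--\underline{H}_+$, compatibility condition (1) gives $\underline{H}_--\underline{H}_+=Sym^k(H_--H_+)$ outside $V$, with $|Sym^k g|_{C^0}\le k|g|_{C^0}$. Near $\Delta$ and $D_{k+1}$, conditions (2)--(4) only require $\underline{H}_\pm$ to equal a $t$-dependent constant lying in $[k\min H_\pm(t,\cdot),k\max H_\pm(t,\cdot)]$; I would choose these constants in parallel (for instance both equal to $k\max H_\pm(t,\cdot)$) and use the same cut-off for the interpolation with $Sym^k H_\pm$, so that the pointwise bound $|\underline{H}_--\underline{H}_+|\le k|H_--H_+|_{C^0}$ will persist on all of $X$. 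Because $H_\pm$ have compact support in $\mathbb{D}$, the difference $H_--H_+$ vanishes somewhere at each $t$, so $\max_{\mathbb{S}^2}|H_--H_+|_t\le \max(H_--H_+)_t-\min(H_--H_+)_t$ and hence $\int_0^1\max|H_--H_+|_t\,dt\le|H_+-H_-|_{(1,\infty)}<\varepsilon/(2k)$. Combining,
\begin{equation*}
\int_0^1\max_X(\underline{H}_-^\epsilon-\underline{H}_+^\epsilon)_t\,dt\ <\ k\cdot\tfrac{\varepsilon}{2k}+2\epsilon|f|_{C^0}\ <\ \tfrac{\varepsilon}{2}+\tfrac{\varepsilon}{2}\ =\ \varepsilon.
\end{equation*}

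This will show that every Floer strip counted by $\phi_{00}$ (indeed by the full $\phi$) shifts the action by strictly less than $\varepsilon$, which is exactly what is needed for both descent statements in the lemma. The hard part will be the bookkeeping near $V$: one must arrange the constants of $\underline{H}_\pm$ carefully so that the lift to $X$ only multiplies the pointwise $C^0$-difference of $H_\pm$ by $k$, rather than by something that depends on the individual norms $|H_\pm|_{C^0}$ (which can be far larger than $|H_--H_+|_{C^0}$). Outside $V$ the bound is immediate from condition (1), so all the subtlety is confined to the small neighbourhood $V$.
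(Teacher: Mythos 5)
Your proposal is correct and follows essentially the same route as the paper's proof: take the linear homotopy $\underline{H}_s=\chi(s)\underline{H}_+^\epsilon+(1-\chi(s))\underline{H}_-^\epsilon$, run the standard action--energy identity, discard the non-negative energy term, and bound $\int(\partial_s\underline{H}_s)\circ u$ by $\int_0^1\min_X(\underline{H}_+^\epsilon-\underline{H}_-^\epsilon)_t\,dt$, which one then compares with $-k|H_+-H_-|_{(1,\infty)}-2\epsilon|f|_{C^0}>-\varepsilon$. (The paper's displayed chain contains a couple of typos --- the cut-off is written with value $-1$ at $-\infty$ instead of $0$, and there is a stray absolute value around $\underline{H}_+^\epsilon-\underline{H}_-^\epsilon$ that would make the middle inequality vacuous --- but the intended computation is exactly yours.)

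The one place you go beyond the paper is the step from $\min_X(\underline{H}_+-\underline{H}_-)$ to $k\min_{\mathbb{S}^2}(H_+-H_-)$. You correctly observe that compatibility condition (4) only sandwiches $\underline{H}_+$ and $\underline{H}_-$ \emph{separately} between $k\min H_\pm$ and $k\max H_\pm$, which bounds their difference in terms of the individual $C^0$-norms of $H_\pm$ rather than the small quantity $|H_+-H_-|$. The paper invokes condition (4) as if it gave the pointwise bound on $\underline{H}_+-\underline{H}_-$ directly, without saying what happens inside $V$. Your fix --- build $\underline{H}_\pm$ with the same cut-off $\rho$ near $\Delta\cup D_{k+1}$ and choose the two $t$-dependent constants in parallel, e.g.\ both equal to $k\max H_\pm(t,\cdot)$ --- makes $\underline{H}_{+,t}-\underline{H}_{-,t}$ a pointwise convex combination of $Sym^k(H_+-H_-)_t$ and the constant $k(\max H_{+,t}-\max H_{-,t})$, both of which lie in $[k\min(H_+-H_-)_t,\,k\max(H_+-H_-)_t]$; combined with the fact that $(H_+-H_-)_t$ vanishes outside $\mathbb{D}$ (so oscillation controls the sup-norm) this gives the needed $\int_0^1\max_X|\underline{H}_+-\underline{H}_-|_t\,dt\le k|H_+-H_-|_{(1,\infty)}$. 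This is a genuine subtlety that the paper leaves implicit; your version is the careful one.

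One minor point worth recording: since $f$ is chosen constant near $\Delta$ and $D_{k+1}$ and $X_{\underline{H}_\pm}$ vanishes there, the perturbed functions $\underline{H}_\pm^\epsilon=\underline{H}_\pm+\epsilon f\circ(\varphi^t_{\underline{H}_\pm})^{-1}$ are still $t$-dependent constants near $\Delta$ and $D_{k+1}$, so your linear homotopy between $\underline{H}_+^\epsilon$ and $\underline{H}_-^\epsilon$ is indeed an admissible homotopy in the sense required by the continuous morphism construction. With that noted, your argument is complete.
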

\begin{proof}
 We only prove the first statement because the second statement is a consequence of the first one.

Let $u \in   \mathcal{M}^{J_{s, t}}(\mathbf{y}_+ , \mathbf{y}_-, \beta)  $ be a $X_{\underline{H}_s}$-perturbed holomorphic strip that contributes to the continuous morphism.  Then we have
 \begin{equation*}
\begin{split}
&\mathcal{A}_{\underline{H}^{\epsilon}_+} ({\mathbf{y}}_+, \hat{\mathbf{y}}_+)  -\mathcal{A}_{\underline{H}^{\epsilon}_-} ({\mathbf{y}}_-, \hat{\mathbf{y}}_-) \\
 =& \int u^*\omega_X  +  \int_0^1 \underline{H}^{\epsilon}_+(t, \mathbf{y}_+(t))dt -  \int_0^1 \underline{H}^{\epsilon}_-(t, \mathbf{y}_-(t))dt\\
=& \int \omega_X(\partial_s u , \partial_t u ) ds \wedge dt +  \int_0^1 \underline{H}^{\epsilon}_+(t, \mathbf{y}_+(t))dt -  \int_0^1 \underline{H}^{\epsilon}_-(t, \mathbf{y}_-(t))dt\\
=& \int (\omega_X(\partial_s u , J_{s,t}(\partial_s u) )  + \omega_X(\partial_su, X_{\underline{H}_s}))ds \wedge dt +  \int_0^1 \underline{H}^{\epsilon}_+(t, \mathbf{y}_+(t))dt -  \int_0^1 \underline{H}^{\epsilon}_-(t, \mathbf{y}_-(t))dt\\
=& \int (|\partial_s u|^2 - d{\underline{H}}_s(\partial_s u))ds \wedge dt +  \int_0^1 \underline{H}^{\epsilon}_+(t, \mathbf{y}_+(t))dt -  \int_0^1 \underline{H}^{\epsilon}_-(t, \mathbf{y}_-(t))dt\\
=&\int |\partial_s u|^2  ds \wedge dt -\int d(\underline{H}_s \circ u dt) + \int  (\partial_s \underline{H}_s) \circ u+   \int_0^1 \underline{H}^{\epsilon}_+(t, \mathbf{y}_+(t))dt -  \int_0^1 \underline{H}^{\epsilon}_-(t, \mathbf{y}_-(t))dt\\
=&\int |\partial_s u|^2  ds \wedge dt + \int ( \partial_s\underline{H}_s ) \circ u ds \wedge dt \\
\ge&   \int (\partial_s\underline{H}) \circ uds \wedge dt  \ge  \int_0^1 \min_{X}|\underline{H}^{\epsilon}_+ -\underline{H}^{\epsilon}_-|dt  \\
\ge& -k |H_+ -H_-|_{(1, \infty)} -2\epsilon|f|_{C^0} >-\varepsilon.
\end{split}
\end{equation*}
The above estimates imply that $\phi_{00}(\underline{H}_s, J_{s,t})$ maps  $CF^{<a}(\underline{L}, \underline{H}_+^{\epsilon})$ to $CF^{<a+ \varepsilon}(\underline{L}, \underline{H}^{\epsilon}_-)$.
\end{proof}



The following definition is an analogy  of  Definition 6  in \cite{MK}.
\begin{definition}
 Suppose that $\varepsilon>2\epsilon |f|_{C^0}$. We say that the spectrum of \textbf{$\mathcal{A}_{\underline{H}^{\epsilon}}$ is $\varepsilon$-admissible}  for $(\epsilon f, J_t)$ if we have
\begin{itemize}
\item
either $ |\mathcal{A}_{\underline{H}^{\epsilon}} ({\mathbf{y}}, \hat{\mathbf{y}}) - \mathcal{A}_{\underline{H}^{\epsilon}} ({\mathbf{y}}', \hat{\mathbf{y}}') | \ge \varepsilon$  or  $ |\mathcal{A}_{\underline{H}^{\epsilon}} ({\mathbf{y}}, \hat{\mathbf{y}}) - \mathcal{A}_{\underline{H}^{\epsilon}} ({\mathbf{y}}', \hat{\mathbf{y}}') | \le   2\epsilon |f|_{C^0}$;
\item
In the case that $ |\mathcal{A}_{\underline{H}^{\epsilon}} ({\mathbf{y}}, \hat{\mathbf{y}}) - \mathcal{A}_{\underline{H}^{\epsilon}} ({\mathbf{y}}', \hat{\mathbf{y}}') | \le   2\epsilon |f|_{C^0}$,   we have $$\#_2  (\mathcal{M}_1^{J_t}(\mathbf{y}_+, \mathbf{y}_-, \beta) / \mathbb{R}) =0,$$ where $\beta$ is the homotopy class such that $  \hat{\mathbf{y}} \# \beta  \# (-\hat{\mathbf{y}}') = 0$   in $\pi_2(X, Sym^k \underline{L})/\ker \omega_X$.

\end{itemize}
\end{definition}


\begin{lemma} \label{lem4}
Let $J_t =  (\varphi_{\underline{H}}^t)_* \circ J_0 \circ  (\varphi_{\underline{H}}^t)_*^{-1}$, where $J_0$ is a generic nearly symmetric almost complex structure. There exists positive constants   $\epsilon^2_{\underline{L}, f, J_0}$ (depending on the link,    $f$ and $J_0$) and $\lambda_{\underline{L}}$ (depending on $\underline{L}$) such that  if $0<\epsilon \le \epsilon^2_{\underline{L}, f, J_0}$, the spectrum of $\mathcal{A}_{\underline{H}^\epsilon }$ is $\lambda_{\underline{L}}$-admissible for $(\epsilon f, J_t)$. Moreover, given $ ({\mathbf{y}}= \varphi_{\underline{H}}^t(\mathbf{x}), \hat{\mathbf{y}}) $,  then we have
 \begin{equation*}
\# \{ ({\mathbf{y}}', \hat{\mathbf{y}}') :|\mathcal{A}_{\underline{H}^{\epsilon}} ({\mathbf{y}}, \hat{\mathbf{y}}) - \mathcal{A}_{\underline{H}^{\epsilon}} ({\mathbf{y}}', \hat{\mathbf{y}}') |     \le 2\epsilon |f|_{C^0} \} = 2^{k}.
 \end{equation*}
\end{lemma}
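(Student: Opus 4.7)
The plan is to reduce the action-spectrum analysis to Morse theory on $Sym^k\underline{L}$ via Lemma~\ref{lem5}. By the previous lemma, the Hamiltonian chords of $\underline{H}^{\epsilon}$ are precisely $\mathbf{y}_{\mathbf{x}}(t)=\varphi_{\underline{H}}^t(\mathbf{x})$ with $\mathbf{x}$ ranging over the $2^k$ critical points of the perfect Morse function $f_{\underline{L}}$. Since $\varphi_{\epsilon f}^t$ fixes each such $\mathbf{x}$, one has
$$\int_0^1 \underline{H}^{\epsilon}_t(\mathbf{y}_{\mathbf{x}}(t))\,dt \;=\; E(\mathbf{x})+\epsilon f(\mathbf{x}), \qquad E(\mathbf{x}):=\int_0^1\underline{H}_t(\varphi_{\underline{H}}^t(\mathbf{x}))\,dt.$$

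Given a reference pair $(\mathbf{y}_{\mathbf{x}},\hat{\mathbf{y}})$ and any other critical point $\mathbf{x}'$, I would construct a canonical ``parallel-transport'' capping $\hat{\mathbf{y}}_{\mathbf{x}'}$ of $\mathbf{y}_{\mathbf{x}'}$ by pre-concatenating $\hat{\mathbf{y}}$ with the homotopy $(s,t)\mapsto \varphi_{\underline{H}}^t(\gamma(s))$, where $\gamma\subset Sym^k\underline{L}$ is any path from $\mathbf{x}'$ to $\mathbf{x}$; this is admissible as a Lagrangian boundary condition since $\varphi_{\underline{H}}^1$ preserves $Sym^k\underline{L}$. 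A short Stokes computation using $\omega_X(\cdot,X_{\underline{H}})=-d\underline{H}$ evaluates the area of the homotopy piece as $E(\mathbf{x}')-E(\mathbf{x})$, and after cancellation gives
$$\mathcal{A}_{\underline{H}^{\epsilon}}(\mathbf{y}_{\mathbf{x}'},\hat{\mathbf{y}}_{\mathbf{x}'})-\mathcal{A}_{\underline{H}^{\epsilon}}(\mathbf{y}_{\mathbf{x}},\hat{\mathbf{y}}) \;=\;\epsilon\bigl(f(\mathbf{x}')-f(\mathbf{x})\bigr)\;\in\;[-2\epsilon|f|_{C^0},\,2\epsilon|f|_{C^0}].$$
Letting $\mathbf{x}'$ run through the $2^k$ critical points (including $\mathbf{x}$ itself) produces the $2^k$ pairs claimed in the second assertion. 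Any further cap-pair $(\mathbf{y}',\hat{\mathbf{y}}'')$ differs from one of these by an additional capping class $\beta\in\pi'_2(X,Sym^k\underline{L})/\ker\omega_X$, shifting the action by $-\omega_X(\beta)$. Since condition (2) of admissibility is vacuous unless $\beta$ carries a $J_t$-holomorphic representative, intersection positivity with the divisors $D_1,\dots,D_{k+1}$ (valid because $J_t=Sym^k j$ near each $D_i$) forces $\beta=\sum a_iu_i$ with $a_i\ge 0$, so $\omega_X(\beta)\ge\lambda_{\underline{L}}:=\min_i\omega_X(u_i)=\min(\lambda,1-k\lambda)>0$. Choosing $\epsilon^2_{\underline{L},f,J_0}$ so small that $2\epsilon|f|_{C^0}<\lambda_{\underline{L}}$ then yields both the first admissibility clause and the exact count $2^k$.

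The principal remaining step, and the main technical obstacle, is the second admissibility clause. If two pairs lie in the same action band with connecting class $\beta$, then Lemma~\ref{lem5} identifies $\mathcal{M}^{J_t}_1(\mathbf{y}_+,\mathbf{y}_-,\beta)/\mathbb{R}$ with the moduli of index-$1$ $X_{\epsilon f}$-perturbed $J_0$-holomorphic strips on $(X,Sym^k\underline{L})$ connecting $\mathbf{x}_+$ and $\mathbf{x}_-$ in the corresponding class $\beta'$. For $\epsilon$ sufficiently small, a standard Floer-type compactness-and-identification argument (whose smallness threshold is where $J_0$ enters the constant $\epsilon^2_{\underline{L},f,J_0}$) shows that every such strip reduces to a negative gradient trajectory of $f_{\underline{L}}$ on $Sym^k\underline{L}$; strips in a nontrivial class $\beta'$ cannot survive this limit, leaving only the Morse class. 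Since $f_{\underline{L}}$ is perfect, its $\mathbb{Z}_2$-Morse differential vanishes, so the index-$1$ count is zero. Taking $\epsilon^2_{\underline{L},f,J_0}$ to be the minimum of the Floer--Morse smallness threshold and the one from the previous paragraph then completes the proof.
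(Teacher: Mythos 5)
Your overall strategy matches the paper's: parametrize the chords by the $2^k$ critical points of the perfect Morse function, parallel-transport cappings via $u(s,t)=\varphi_{\underline H}^t(\eta(s))$, evaluate the area contribution by Stokes, and invoke Oh's theorem (through Lemma~\ref{lem5}) to identify low-energy index-$1$ strips with Morse trajectories. The Floer--Morse reduction and the perfectness argument are essentially what the paper does.

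There is, however, a genuine gap in your treatment of the first admissibility clause and the constant $\lambda_{\underline L}$. You set $\lambda_{\underline L}:=\min_i\omega_X(u_i)$ and justify the lower bound $\omega_X(\beta)\ge\lambda_{\underline L}$ by intersection positivity, which forces the coefficients $a_i$ in $\beta=\sum a_i u_i$ to be nonnegative. But that inference requires $\beta$ to admit a $J_t$-holomorphic representative, and the first bullet in the definition of $\varepsilon$-admissibility is an \emph{unconditional} dichotomy: for \emph{every} pair of (chord, capping), the action difference must be $\ge\lambda_{\underline L}$ or $\le 2\epsilon|f|_{C^0}$. The capping-difference class $A=\hat{\mathbf y}\#[u]\#(-\hat{\mathbf y}')$ is an arbitrary element of $\pi_2(X,Sym^k\underline L)/\ker\omega_X$, i.e.\ an arbitrary integer combination of the $u_i$; its area can then be small by cancellation. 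Concretely, with $\beta=u_1-u_{k+1}$ one has $\omega_X(\beta)=\lambda-(1-k\lambda)$, which can be made arbitrarily small and nonzero, and your proposed $\lambda_{\underline L}=\min(\lambda,1-k\lambda)$ is then far too large; the dichotomy fails, and so does the exact count $2^k$ (one would then find additional pairs inside the $2\epsilon|f|_{C^0}$ band). The paper avoids this by taking $\lambda_{\underline L}$ to be one half of the minimum \emph{nonzero} value of $\bigl|\sum_{i=1}^{k+1}a_i\int_{B_i}\omega\bigr|$ over all integer tuples $(a_1,\dots,a_{k+1})$, with no positivity assumption on the $a_i$. You should replace your definition by this one; the rest of your argument (including the use of Lemma~\ref{lem5} and Oh's correspondence for the moduli count) then goes through as in the paper.
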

\begin{proof}
 Let $\mathbf{y}(t) =\varphi^t_{\underline{H}^{\epsilon}} (\mathbf{x})$ and  $\mathbf{y}'(t) =\varphi^t_{\underline{H}^{\epsilon}} (\mathbf{x}')$  be Hamiltonian chords, where $\mathbf{x}, \mathbf{x}' \in Crit(f_{\underline{L}})$.  Let $\eta: [0,1] \to Sym^k \underline{L}$ be a path from $\mathbf{x}'$ to $\mathbf{x}$.   Define  $u(s, t) : =\varphi_{\underline{H}}^t(\eta(s))$. Note that $u(1, t) = \mathbf{y}(t)$, $u(0, t) = \mathbf{y}'(t)$, $u(s, 0) =\eta(s) \subset Sym^k \underline{L}$ and  $u(s, 1) =\varphi_{\underline{H} }^1(\eta(s)) \subset Sym^k \underline{L}$ because $\varphi_H^1(\underline{L}) = \underline{L}$.  Then
 \begin{equation*}
\int u^* \omega_X = \int  \omega_X(\partial_s u, X_{\underline{H}} \circ u) ds\wedge dt = -\int d \underline{H} (\partial_s u) ds \wedge dt = \int_0^1 \underline{H}(\mathbf{y}'(t))dt -\int_0^1 \underline{H}(\mathbf{y}(t))dt.
 \end{equation*}
Therefore, we have
 \begin{equation*}
\begin{split}
\mathcal{A}_{\underline{H}^{\epsilon}}(\mathbf{y}, \hat{\mathbf{y}}) - \mathcal{A}_{\underline{H}^{\epsilon}}(\mathbf{y}', \hat{\mathbf{y}}') = -\int_A \omega_X + \epsilon f(\mathbf{x}) - \epsilon f(\mathbf{x}').
\end{split}
 \end{equation*}
Here the class $A :=  \hat{\mathbf{y}} \# [u] \# (-\hat{\mathbf{y}}') $ belongs to $\pi_2(X, Sym^k \underline{L})/\ker \omega_X$.  

By  Corollary  4.10 of \cite{CHMSS}, $\pi_2(X, Sym^k \underline{L})$ is freely generated by $\{u_i\}_{i=1}^{k+1}$ plus some torsion terms, where $u_i$ are  the  tautological correspondent  of $B_i$.  By the construction, we have  $\int u_i^* \omega_X = \int_{B_i} \omega$.  Define
\begin{equation*}
   \lambda_{\underline{L}}:=  \frac{1}{2} \min\{ \varepsilon>0 \vert  \exists   (a_1...a_k) \in \mathbb{Z}^{k+1} \mbox{ s.t } |\sum_{i=1}^{k+1} a_i \int_{B_i} \omega | = \varepsilon\}>0.
\end{equation*}
Then $|\int_A \omega_X| \ge 2 \lambda_{\underline{L}}$ if $\int_A \omega_X \ne 0.$ Let $\epsilon'_{\underline{L}, f} :=\min\{ \epsilon_{\underline{L}, f}^1,   \lambda_{\underline{L}}/10|f|_{C^0} \}$.  Assume that $0< \epsilon\le \epsilon'_{\underline{L}, f}.$
 If $\int_A \omega_X \ne 0$, then
 \begin{equation*}
|\mathcal{A}_{\underline{H}^{\epsilon}}(\mathbf{y}, \hat{\mathbf{y}})- \mathcal{A}_{\underline{H}^{\epsilon}}(\mathbf{y}', \hat{\mathbf{y}}') | \ge  2\lambda_{\underline{L}} -2\epsilon|f|_{C^0}  \ge \lambda_{\underline{L}}.  
 \end{equation*}
If $\int_A \omega_X =0$, then  $$|\mathcal{A}_{\underline{H}^{\epsilon}}(\mathbf{y}, \hat{\mathbf{y}}) - \mathcal{A}_{\underline{H}^{\epsilon}}(\mathbf{y}', \hat{\mathbf{y}}')| =  |\epsilon f(\mathbf{x}) - \epsilon f(\mathbf{x}')| \le 2\epsilon |f|_{C^0}.$$
 By Y.-G. Oh's results  (Proposition 4.1, 4.6) \cite{Oh},  there exists a constant $\epsilon''_{\underline{L}, f, J_0}$ such that for any $0<\epsilon \le \epsilon_{\underline{L}, f, J_0}''$, the index 1 $X_{\epsilon f}$-strips with energy smaller than $\lambda_{\underline{L}}$ are 1-1 corresponding to the index 1 Morse flow lines of $f_{\underline{L}}$. By Lemma \ref{lem5}, we have   $$\#_2  (\mathcal{M}_1^{J_t}(\mathbf{y}_+, \mathbf{y}_-, \beta) / \mathbb{R}) =0,$$ where  $\beta$ is the homotopy class such that $  \hat{\mathbf{y}} \# \beta  \# (-\hat{\mathbf{y}}') = 0$.  Here we use the assumption that $f_{\underline{L}}$ is perfect. Set $\epsilon^2_{\underline{L}, f, J_0} := \min\{\epsilon'_{\underline{L}, f, J_0}, \epsilon''_{\underline{L}, f}\}$. This is the constant in the statement.


Fix $(\mathbf{y}, \hat{\mathbf{y}})$. For any $ {\mathbf{y}}'$, there exists a unique $ \hat{\mathbf{y}}' \in \pi_2( {\mathbf{x}},  {\mathbf{y}}') / \ker \omega_X$ such that $A=0$. 
Therefore, we can deduce the second statement.
\end{proof}

\begin{proof} [Proof of Theorem \ref{thm1}]
Let $  \varepsilon =  \lambda_{\underline{L}}/100$ and $0<\epsilon \le \epsilon^2_{\underline{L}, f, J_0} $ such that $\epsilon|f|_{C^0} < \varepsilon/4$.
Fix  $\varphi_{+} \in Ham_{\underline{L}}(\mathbb{D}, \omega)$. Let $H_+$ be a Hamiltonian function   generating    $\varphi_+$ with compact support. Fix $ ({\mathbf{y}}_+, \hat{\mathbf{y}}_+)$, where $\mathbf{y}_+(t)=  \varphi^t_{\underline{H}_+}(\mathbf{x})$ for $\mathbf{x} \in Crit(f_{\underline{L}})$.  Let
\begin{equation*}
 a=\mathcal{A}_{\underline{H}^{\epsilon}_+} ({\mathbf{y}}_+, \hat{\mathbf{y}}_+) -5\varepsilon  \mbox{ and } b=\mathcal{A}_{\underline{H}^{\epsilon}_+} ({\mathbf{y}}_+, \hat{\mathbf{y}}_+) + 5\varepsilon.
\end{equation*}
 By Lemma \ref{lem4}, we have
\begin{equation*}
{CF}^{(a, b)}(\underline{L}, \underline{H}^{\epsilon}_+) = \mathbb{Z}_2 \{ ({\mathbf{y}}', \hat{\mathbf{y}}')  \vert  |\mathcal{A}_{\underline{H}^{\epsilon}_+} ({\mathbf{y}}_+, \hat{\mathbf{y}}_+) -\mathcal{A}_{\underline{H}^{\epsilon}_+} ({\mathbf{y}}', \hat{\mathbf{y}}') | \le 2\epsilon |f|_{C^0}    \}.
\end{equation*}
 The $\lambda_{\underline{L}}$-admissible conditions  also imply that  $\partial_{00}=0$ on  $CF^{(a, b)}(\underline{L}, \underline{H}^{\epsilon}_+) $. 
By Lemma \ref{lem4}, we have
\begin{equation*}
\widehat{HF}^{(a, b)}(\underline{L}, H^{\epsilon}_+) = \mathbb{Z}_2 \{ ({\mathbf{y}}', \hat{\mathbf{y}}')  \vert   |\mathcal{A}_{\underline{H}^{\epsilon}_+} ({\mathbf{y}}_+, \hat{\mathbf{y}}_+) -\mathcal{A}_{\underline{H}^{\epsilon}_+} ({\mathbf{y}}', \hat{\mathbf{y}}') | \le 2\epsilon |f|_{C^0}     \}=\mathbb{Z}_2^{2^k},
\end{equation*}
 By the same argument,  we have $\widehat{HF}^{(a+2\varepsilon, b+2\varepsilon)}(\underline{L}, H^{\epsilon}_+) = \mathbb{Z}_2^{2^k}$.

Set  $\varepsilon_{\underline{L}}: = \varepsilon/3  =\lambda_{\underline{L}} / 300$. Let $\varphi_- \in Ham_{\underline{L}}(\mathbb{D}, \omega) $   such that $d_{Hofer}(\varphi_+, \varphi_-)<  \varepsilon_{\underline{L}}/ k$.  For any $0<\delta<  \varepsilon/ 6k$,  we can find   a Hamiltonian function  $H_-$   generating  $\varphi_-$ with  compact support  such that $$|H_+ - H_-|_{(1, \infty)} \le d_{Hofer}(\varphi_+, \varphi_-) + \delta < \varepsilon/2k.$$
By Lemma \ref{lem1}, we have
\begin{equation*}
\mathbb{Z}_2^{2^k}= \widehat{HF}^{(a, b)}(\underline{L}, H^{\epsilon}_+)  \xrightarrow{\Phi_0(\underline{H}^{\epsilon}_+, \underline{H}^{\epsilon}_-)}  \widehat{HF}^{(a+\varepsilon, b+\varepsilon)}(\underline{L}, H^{\epsilon}_-)    \xrightarrow{\Phi_0(\underline{H}^{\epsilon}_-, \underline{H}^{\epsilon}_+)}  \widehat{HF}^{(a+2\varepsilon, b+2\varepsilon)}(\underline{L}, {H}^{\epsilon}_+) =\mathbb{Z}_2^{2^k}.
\end{equation*}
By the filtered version of Lemma \ref{lem2}, we have $\Phi_0(\underline{H}^{\epsilon}_+, \underline{H}^{\epsilon}_-)\circ \Phi_0(\underline{H}^{\epsilon}_-, \underline{H}^{\epsilon}_+) = \Phi_0(\underline{H}^{\epsilon}_+, \underline{H}^{\epsilon}_+)$.  We can use $({\underline{H}^{\epsilon}_+}, J_t)$ to define $\Phi_0(\underline{H}^{\epsilon}_+, \underline{H}^{\epsilon}_+)$.  By index reason, the $X_{\underline{H}^{\epsilon}_+}$-perturbed holomorphic strips   contributing to  $\phi_{00}({\underline{H}^{\epsilon}_+}, J_t)$   must be the trivial strips. Therefore, $ \Phi_0({\underline{H}^{\epsilon}_+}, \underline{H}^{\epsilon}_+)$ is the identity. 

As a result, $\Phi_0(\underline{H}^{\epsilon}_+, \underline{H}^{\epsilon}_-) $ is an  injection. 
Then  there is  a  $X_{\underline{H}_s}$-strip $u$ such that $\lim_{s \to \pm \infty} u(s, t) = \mathbf{y}_{\pm}(t)$ and $u \cdot \Delta =u\cdot D_{k+1}=0$. 
Recall that  we choose $J_{s,t} =Sym^kj $  and $\underline{H}_s$ to be constant  near $\Delta$ and $D_{k+1}$,  then  $u \cdot \Delta = u \cdot D_{k+1} =0$  implies that $u$  is disjoint from $\Delta$ and $\{z_{k+1}'\} \times Sym^{k-1} \mathbb{S}^2$, where $z_{k+1}'$ is sufficiently close to $z_{k+1}. $ Hence, $u$ lies inside $Sym^k (\mathbb{S}^2 -U_{z_{k+1}})$, where  $U_{z_{k+1}}$ is a sufficiently  small neighborhood of $z_{k+1}$.  

Let $\mathbf{p}_{\pm}$ be  paths in $Sym^k \underline{L}$ connecting $ \mathbf{y}_{\pm}(0)$ and $ \mathbf{y}_{\pm}(1)$.  Then we have braids $\bar{\gamma}_{\pm} = \mathbf{y}_{\pm} \cup (-\mathbf{p}_{\pm}). $ To produce a homotopy, we need to cap off   $ \mathbf{p}_+  \cup u \cup (-\mathbf{p}_-)$, where  $ \mathbf{p}_+  \cup u \cup (-\mathbf{p}_-)$    is the disjoin union $u \sqcup \mathbf{p}_+ \sqcup \mathbf{p}_-$ modulo out  the relations $(\mathbf{p}_{\pm}(0) \sim  u(\pm\infty, 0))$ and $(\mathbf{p}_{\pm}(1) \sim  u(\pm\infty, 1))$.  Strictly speaking, here we work with the compactification $u: \overline{\mathbb{R}} \times [0,1] \to X$, where $\overline{\mathbb{R}}  = \mathbb{R} \cup\{\pm \infty\}$. 

The precise meaning of capping  off $ \mathbf{p}_+  \cup u \cup (-\mathbf{p}_-)$ is as follows. Let $\partial u := u(\overline{\mathbb{R}} \times \{0,1\})$. Note that   $\partial u $ does not include the Hamiltonian chords.  Let $\mathbf{p}_{+} \cup \partial u \cup (-\mathbf{p}_{-})$ denote the concatenation of  $\mathbf{p}_{+}$, $ \partial u$ and $-\mathbf{p}_{-}$, i.e., the disjoin union $   \mathbf{p}_+ \sqcup   \partial u  \sqcup \mathbf{p}_-$ modulo out  the relations $(\mathbf{p}_{\pm}(0) \sim  u(\pm\infty, 0))$ and $(\mathbf{p}_{\pm}(1) \sim  u(\pm\infty, 1))$.  This forms a loop in $Sym^k \underline{L}$.  We want to show that $\mathbf{p}_{+} \cup \partial u \cup (-\mathbf{p}_{-})$ bounds a disk. Then gluing  this disk with $ \mathbf{p}_+ \cup u \cup (-\mathbf{p}_-)$ will produce a homotopy between the braids.

To this end, we first recall that $\pi_2(X, Sym^k \underline{L})$ is freely generated by $\{u_i\}_{i=1}^{k+1}$ plus some torsion part.  Also, we have  the following facts (see Corollary 4.10 of \cite{CHMSS}):
\begin{enumerate}
\item
$\{\partial u_i\}_{i=1}^k$ generates $\pi_1(Sym^k \underline{L})$.
\item
For $1 \le i \le k$, we have $u_i \cdot \Delta =0$.
\item
$u_i \cdot D_j =\delta_{ij}$. In particular, $u_i \cdot D_{k+1}=0$ for any $1\le i\le k$.
\end{enumerate}
Identify $\mathbb{S}^2 -U_{z_{k+1}}$ with    $\mathbb{D}$ topologically.  We claim that there exists  a disk $$u': (\mathbb{D}, \partial \mathbb{D}) \to (Sym^k \mathbb{D}, Sym^k \underline{L})$$ such that $\partial u' =\mathbf{p}_{+} \cup \partial u \cup (-\mathbf{p}_{-})$ and $u'$ is disjoint from $\Delta$.  Note that  the claim is  not a consequence of the Whitney trick because $\pi_1(X - \Delta) \to \pi_1(X) =0$ is not injective.  Suppose that
$[\mathbf{p}_{+} \cup \partial u \cup (-\mathbf{p}_{-})] = \Pi_{j} (\partial u_{i_j})^{\pm 1}$ in $\pi_1(Sym^k \underline{L})$, where $1\le i_j \le k$.  Here we may  have $i_j = i_{j'}$ for $j \ne j'. $  Let $\varphi_i: \mathbb{D} \to B_i$ be a diffeomorphism for $1\le i \le k$.  Define maps $v_{i_j}(z): =[\varphi_{i_j}(z), z^1_{i_j}, ..., z_{i_j}^{k-1}]$, where $z^1_{i_j}, ..., z_{i_j}^{k-1}$ are  fixed points in the  circles other than $L_{i_j}$.  Then $v_{i_j}: (\mathbb{D}, \partial \mathbb{D}) \to  (Sym^k \mathbb{D}, Sym^k \underline{L})$ is a disk representing  the class $u_{i_j}$. We also require that $[z^1_{i_j}, ...,z_{i_j}^{k-1}]$ are distinct for different $j$. As a result,  $v_{i_j}$ are distinct maps.    Concatenating these disks produces a disk 
\begin{equation*}
\Pi_j v^{\pm 1}_{i_j} :  (\mathbb{D}, \partial \mathbb{D}) \to  (Sym^k \mathbb{D}, Sym^k \underline{L}) 
\end{equation*}
such  that  $\partial ([\Pi_j v^{\pm 1}_{i_j}] )= [\mathbf{p}_{+} \cup \partial u \cup (-\mathbf{p}_{-})] $. Note that $v_{i_j}$ are disjoint from $\Delta$, so is $\Pi_j v^{\pm 1}_{i_j}$.  Take a homotopy $v'$ between   $\partial ( \Pi_j v^{\pm 1}_{i_j} ) $ and $\mathbf{p}_{+} \cup \partial u \cup (-\mathbf{p}_{-}) $ in $Sym^k \underline{L}$.  Gluing  $ \Pi_j v^{\pm 1}_{i_j}$ and $v'$ along   $\partial ( \Pi_j v^{\pm 1}_{i_j} ) $,  then the result $u'$ satisfies our requirements.

Finally, gluing $u$ and $u'$ along $\mathbf{p}_{+} \cup \partial u \cup (-\mathbf{p}_{-})$, then  we get a cylinder $\bar{u}: [- 1, 1]\times S^1 \to Sym^k \mathbb{D}$ (after reparametrization) such that $\bar{u}(\pm 1, t) = \bar{ \gamma}_{\pm}(t)$ and $\bar{u}$ is disjoint from $\Delta$. This gives a homotopy between $\bar{\gamma}_+$ and $\bar{\gamma}_-$. Therefore, $b(\varphi_+, \underline{L}) = b(\varphi_-, \underline{L}).$
\end{proof}

Shenzhen University

\href{mailto: ghchen@szu.edu.cn} {\tt ghchen@szu.edu.cn} \\
\end{document}